\newcommand{\B}{\mathrm{B}}
\newcommand{\Ba}{\mathscr{B}}
\newcommand{\e}{\mathbf{e}}
\newcommand{\M}{\mathrm{M}}
\newcommand{\sm}[1]{\left(\begin{smallmatrix}#1\end{smallmatrix}\right)}
\newtheorem{theorem}{Theorem}[section]
\newtheorem{proposition}[theorem]{Proposition}
\newtheorem{lemma}[theorem]{Lemma}
\newtheorem{remark}[theorem]{\textsc{Remark}}
\numberwithin{equation}{section}
\begin{document}

\title[Small connections are cyclic]{Small connections are 
cyclic}

\author{Andrea Pulita}

\email{pulita@math.univ-montp2.fr}

\address{Departement de Mathématique, 
Université de Montpellier II, Bat 9, CC051, 
Place Eugène Bataillon, 34095 
Montpellier Cedex 05, France.}


\keywords{cyclic vector, $p$-adic differential equations}

\begin{abstract}
The main local invariants of a (one variable) differential 
module over the complex numbers are given by means of 
a cyclic basis. In the $p$-adic setting the existence of a 
cyclic vector is often unknown. 
We investigate the existence of such a cyclic vector in a 
Banach algebra. We follow the explicit method of 
Katz \cite{Katz-cyclic}, and we prove 
the existence of such a cyclic vector under the assumption 
that the matrix of the derivation is small enough in norm.
\if{ and we precise the upper bound on the norm. 
Then we investigate whether this assumption is satisfied in some basis. 
Since the matrix of the derivation is not an invariant of the differential 
module, in the case of a one dimensional affinoid space we provide a sufficient condition in terms 
of the radius of convergence function ensuring that the matrix of the derivation is small.}\fi 
\end{abstract}

\maketitle


\makeatletter
\renewcommand\tableofcontents{%
    \subsection*{\contentsname}%
    \@starttoc{toc}%
    }
\makeatother

\setcounter{tocdepth}{3} \tableofcontents

\setcounter{section}{0}

\section{Katz's simple algorithm for cyclic vectors}
Let $(\Ba,d)$ be a commutative ring $\Ba$ with unit, together with a 
derivation\footnote{i.e. a $\mathbb{Z}$-linear map satisfying the Leibnitz rule $d(ab)=ad(b)+d(a)b$} 
$d:\Ba\to\Ba$.
We denote by $\Ba^{d=0}:=\{b\in\Ba\textrm{ such that }d(b)=0\}$ the sub-ring of constants.
A differential module $\M$ is a free $\Ba$-module of finite rank together with an action of the derivation 
\begin{equation}
\nabla\;:\;\M\to\M
\end{equation}
i.e. a $\mathrm{Z}$-linear map satisfying $\nabla(bm)=d(b)m+b\nabla(m)$ for all $b\in\Ba$, $m\in\M$. 
A cyclic vector for $\M$ is an element $m\in\M$ such that the family 
$\{ m,\nabla(m),\nabla^2(m),\ldots,\nabla^{n-1}(m)\}$ is a basis of $\M$ over $\Ba$. Such a vector does not always 
exists. Namely if $d=0$ is the trivial derivation, then $\nabla$ is merely a $\Ba$-linear map and $(\M,\nabla)$ is a 
torsion module over the ring of polynomials $\Ba[X]$ where the action of $X$ on $\M$ is given by $\nabla$. 
There is another counterexample in the case in which $\Ba=\mathbb{F}_p(X)$ is a
functions field in characteristic $p>0$: let $\M:=\mathbb{F}_q[X]^n$, with $n>q=p^r$, 
together with the trivial connection 
$\nabla(f_1,\ldots,f_n)=(f_1',\ldots,f_n')$, then, since $d^q=0$, one has $\nabla^q=0$ so $\M$ does not 
have any cyclic vector. The same happens replacing $\mathbb{F}_q$ by a ring $A$ having a maximal ideal $\mathfrak{m}$ 
such that $A/\mathfrak{m}\cong\mathbb{F}_q$. The trivial connection of $A[X]^n$ (with respect to $d/dx$) 
can not admit a cyclic vector, since otherwise its reduction to $\mathbb{F}_q[X]$ would be cyclic too.
\subsection{Three cyclic vector theorems.}  P.Deligne provided the existence of such a cyclic vector 
for all differential modules over a field of characteristic $0$ with non trivial derivation 
(cf. \cite[Ch.II,Lemme 1.3]{Deligne-Reg-Sing}). 
\begin{theorem}[\protect{\cite[Ch.II, Lemme 1.3]{Deligne-Reg-Sing}}]\label{deligne}
Let $\Ba$ be a field of characteristic $0$, then all differential modules over $\Ba$ admit a cyclic vector.
\end{theorem}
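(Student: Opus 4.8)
The plan is to prove the existence of a cyclic vector for a differential module $\M$ of rank $n$ over a characteristic-$0$ field $\Ba$ with nontrivial derivation $d$. Since $d\neq 0$, there exists $t\in\Ba$ with $d(t)=1$ after rescaling, so I may assume the derivation behaves like $d/dt$ on a suitable element. I would argue by finding a vector $m$ whose iterated images $m,\nabla(m),\dots,\nabla^{n-1}(m)$ are linearly independent over $\Ba$.

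First I would fix an arbitrary $\Ba$-basis $e_1,\dots,e_n$ of $\M$ and consider candidate cyclic vectors of the special form $m=\sum_{i=0}^{n-1}\frac{t^i}{i!}\,e_{i+1}$, or more robustly a one-parameter family $m(s)$ built from a parameter $s\in\Ba$, following Katz's explicit recipe. The key computation is to expand $\nabla^j(m)$ using the Leibniz rule: each application of $\nabla$ produces a term $d(\cdot)$ acting on the scalar coefficients plus the connection acting on the basis vectors. The leading behaviour, obtained by differentiating the scalar coefficients, is triangular with respect to the filtration by powers of $t$, while the contributions of $\nabla(e_i)$ are of strictly lower order in that filtration.

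The main step is then to show that the determinant $D$ of the matrix expressing $\{\nabla^j(m)\}_{j=0}^{n-1}$ in the basis $\{e_i\}$ is nonzero. Because of the triangular leading term described above, $D$ is a nonzero element \emph{of} a polynomial-type expression whose top-degree part (in the auxiliary parameter $t$ or $s$) is a nonzero constant; here is precisely where characteristic $0$ enters, since the factorials $i!$ are invertible and do not vanish. A polynomial over the integral domain $\Ba$ that has a nonzero coefficient cannot be identically zero, so one can specialize the parameter to make $D\neq 0$, yielding the desired cyclic vector.

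The hard part will be controlling the lower-order correction terms coming from the connection matrix so as to guarantee that they cannot conspire to cancel the leading triangular contribution to $D$. The clean way to handle this is to treat $t$ (or the parameter $s$) as a formal variable and track degrees carefully: one shows that $D$, viewed as a polynomial in this variable, has a leading coefficient that is a nonzero scalar (a product of factorials times $1$), independent of the connection. Since $\Ba$ has characteristic $0$ this leading coefficient is a unit, so $D$ is a nonzero polynomial and hence nonzero for all but finitely many specializations, completing the proof. \CVD
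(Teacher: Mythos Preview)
Your overall strategy matches the paper's: introduce a formal parameter, show the base-change determinant is a nonzero polynomial in that parameter, then specialize. But the key computational claim is wrong, and this is a genuine gap. You assert that the contributions of $\nabla(e_i)$ are of \emph{strictly lower order} in the $t$-filtration than the terms coming from differentiating the scalar coefficients. The opposite is true: applying $\nabla$ to a basis vector leaves the $t$-degree unchanged, while differentiating a scalar coefficient \emph{lowers} it. Thus the connection corrections sit at the \emph{same} degree as the ``triangular'' part, not below it, and your claimed leading coefficient of $D$ is not independent of the connection. Concretely, your naive candidate $m=\sum_{i}\frac{t^{i}}{i!}e_{i+1}$ can fail outright: for $n=2$ with $\nabla(e_1)=-e_2$, $\nabla(e_2)=0$ one gets $\nabla(m)=\nabla(e_1)+d(t)e_2+t\nabla(e_2)=-e_2+e_2=0$, so $D\equiv 0$ and no specialization helps.

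The paper's fix is exactly the point you glossed over. One does \emph{not} look at the top coefficient but at the \emph{constant term}: working over $\Ba[X]$ with $d(X)=1$, Katz's vector $c(\e,X)=\sum_{j}\frac{X^{j}}{j!}\sum_{k}(-1)^{k}\binom{j}{k}\nabla^{k}(e_{j-k})$ is engineered precisely so that the $X^{0}$-components of $c,\nabla(c),\ldots,\nabla^{n-1}(c)$ are \emph{exactly} $e_0,\ldots,e_{n-1}$; hence the base-change matrix $H(X)$ satisfies $H(0)=\mathrm{Id}$ and $P(X)=\det H(X)$ has $P(0)=1$. Your simpler vector lacks the correction terms $(-1)^{k}\binom{j}{k}\nabla^{k}(e_{j-k})$ and therefore does not force $H(0)=\mathrm{Id}$. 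Once $P(0)=1$ is established, the polynomial $P$ is nonzero of degree $\le n(n-1)$; since in characteristic~$0$ the constant subfield $\Ba^{d=0}$ is infinite, some specialization $X\mapsto t-a$ (with $a\in\Ba^{d=0}$, so that $d(t-a)=1$) gives $P(t-a)\neq 0$. The reduction to the case $d(t)=1$ that you invoke at the start is handled by Lemma~\ref{change of derivation}.
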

Subsequently N.Katz generalized the result of Deligne 
providing the following simple explicit algorithm:
\begin{theorem}[(\cite{Katz-cyclic})]\label{theorem 1 of Katz}
Assume that there exists an element $t\in \Ba$ such that $d(t)=1$. 
Assume moreover that $(n-1)!$ is invertible in $\Ba$, and that $\Ba^{d=0}$ contains a field $k$ such that
\footnote{The symbol $\#k$ means the number of elements of $k$, or, if $k$ is infinite, its cardinality.} $\#k>n(n-1)$.
Let $a_0,a_1,\ldots,a_{n(n-1)}$ be $n(n-1)+1$ distinct elements of $k$, and let $\e:=\{e_0,\ldots,e_{n-1}\}\subset\M$ 
be a basis of $\M$ over $\Ba$. Then Zarisky locally on $Spec(\Ba)$ one of the vectors 
\begin{equation}\label{cyclic vector}
c(\e,t-a_i)\;:=\;\sum_{j=0}^{n-1}\frac{(t-a_i)^j}{j!}\sum_{k=0}^j(-1)^k\binom{j}{k}\nabla^k(e_{j-k})
\end{equation}
is a cyclic vector of $\M$.
\end{theorem}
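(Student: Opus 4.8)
The plan is to detect cyclicity through the determinant and to repackage the whole family $\{c(\e,t-a_i)\}_i$ as a single polynomial in the parameter, to which a Vandermonde-type count applies. Fix the basis $\e$, and for a constant $a\in k$ let $M(a)$ be the matrix whose columns are the coordinates in $\e$ of $c(\e,t-a),\nabla c(\e,t-a),\ldots,\nabla^{n-1}c(\e,t-a)$, and set $\Delta(a):=\det M(a)\in\Ba$. Then $c(\e,t-a)$ is a cyclic vector exactly when $\Delta(a)$ is a unit, and ``Zariski locally on $\mathrm{Spec}(\Ba)$ one of the $c(\e,t-a_i)$ is cyclic'' means precisely that the $\Delta(a_i)$ generate the unit ideal, i.e. that no prime $\mathfrak p$ of $\Ba$ contains all of them. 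I would now promote $a$ to a formal variable $X$, viewed as a new constant ($dX=0$); since $(n-1)!$ is invertible, $c(\e,t-X)$ is well defined in $\M\otimes_\Ba\Ba[X]$. Because $\nabla$ commutes with multiplication by the constant $X$ and $(t-X)^j$ has $X$-degree $j\le n-1$, each entry of $M(X)$ is a polynomial in $X$ of degree $\le n-1$; hence $P:=\det M(X)\in\Ba[X]$ has $\deg_X P\le n(n-1)$ and $\Delta(a)=P(a)$.

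The key step is to reduce $M(X)$ modulo the ideal $(t-X)$. Writing $\nabla^\ell c(\e,t-X)=\sum_j \frac{(t-X)^j}{j!}f_j^{(\ell)}$ and using $d(t-X)=1$, the Leibniz rule gives the recurrence $f_j^{(\ell+1)}=f_{j+1}^{(\ell)}+\nabla f_j^{(\ell)}$ with initial data $f_j^{(0)}=\sum_k(-1)^k\binom{j}{k}\nabla^k(e_{j-k})$. An induction on $\ell$ using Pascal's identity then yields the closed form
\[
f_j^{(\ell)}=\sum_{k}(-1)^k\binom{j}{k}\nabla^k(e_{j+\ell-k}),
\]
whose $j=0$ specialization is simply $f_0^{(\ell)}=e_\ell$. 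Since $f_0^{(\ell)}$ is exactly the reduction of $\nabla^\ell c(\e,t-X)$ modulo $(t-X)$, the matrix $M(X)$ reduces to the identity, so $P(X)\equiv 1\pmod{(t-X)\Ba[X]}$. I expect the verification of this closed form to be the main obstacle: it is where the specific binomial weights in $c(\e,t-a)$ are indispensable, whereas everything else is formal. One should only note that $e_m$ is undefined for $m\ge n$; but only $f_0^{(\ell)}$ with $\ell\le n-1$ is needed, so the index ranges cause no difficulty.

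To conclude, from $P(X)-1\in(t-X)\Ba[X]$ I apply the evaluation $\Ba[X]\to\Ba$, $X\mapsto t$, which kills $t-X$ and gives $P(t)=1$; writing $P=\sum_r\delta_r X^r$ this reads $\sum_r\delta_r t^r=1$ in $\Ba$. Now suppose, for contradiction, that some prime $\mathfrak p$ contains every $\Delta(a_i)=P(a_i)$ for $i=0,\ldots,n(n-1)$. The field $k$ injects into $\Ba/\mathfrak p$, and the $n(n-1)+1$ elements $a_i$ remain distinct there since their differences are units of $k$. Thus the reduction $\bar P$ is a polynomial of degree $\le n(n-1)$ over the domain $\Ba/\mathfrak p$ having $n(n-1)+1$ distinct roots $\bar a_i$, which forces $\bar P=0$, i.e. $\delta_r\in\mathfrak p$ for all $r$. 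But then $1=\sum_r\delta_r t^r\in\mathfrak p$, a contradiction. Hence the $\Delta(a_i)$ lie in no common prime, they generate the unit ideal, and therefore Zariski-locally on $\mathrm{Spec}(\Ba)$ one of the vectors $c(\e,t-a_i)$ is cyclic.
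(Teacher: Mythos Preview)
Your proof is correct and follows essentially the same strategy as the paper: parametrize the Katz family by an auxiliary variable, show that the resulting determinant polynomial is congruent to $1$ at a distinguished point, and finish with a Vandermonde/degree argument. The differences are cosmetic. First, you take $X$ to be a constant ($dX=0$) and form $c(\e,t-X)$, while the paper sets $d(X)=1$ and forms $c(\e,X)$; these are related by the substitution $X\leftrightarrow t-X$, so your congruence $P(X)\equiv 1\pmod{t-X}$ (hence $P(t)=1$) is exactly the paper's $H(0)=\mathrm{Id}$ (hence $P(0)=1$), and your recursion for $f_j^{(\ell)}$ reproves the paper's inversion formula. Second, for the final step the paper inverts the Vandermonde matrix $((t-a_i)^s)_{i,s}$ directly over $\Ba$ to conclude that the ideal generated by the $P(t-a_i)$ coincides with the ideal generated by the coefficients of $P$ (which contains $r_0=1$), whereas you reduce modulo a prime and count roots over the domain $\Ba/\mathfrak{p}$; these two endgames are equivalent.
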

\begin{theorem}[(\cite{Katz-cyclic})]\label{theorem 2 of Katz}
If $\Ba$ is a local $\mathbb{Z}[1/(n-1)!]$-algebra, and if $a\in\Ba^{d=0}$ is such that the maximal ideal of $\Ba$
contains $t-a$, then $c(\e,t-a)$ is a cyclic vector for $\M$. 
\end{theorem}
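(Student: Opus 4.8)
The plan is to show that the change-of-basis matrix $P$ whose columns express $c(\e,t-a),\nabla(c(\e,t-a)),\ldots,\nabla^{n-1}(c(\e,t-a))$ in the basis $\e$ has determinant a unit of $\Ba$; since $\Ba$ is local this is equivalent to $\det P\notin\mathfrak{m}$, i.e. to nonvanishing of $\det P$ in the residue field. Write $s:=t-a$; as $a\in\Ba^{d=0}$ we have $d(s)=1$, and by hypothesis $s\in\mathfrak{m}$. The whole argument reduces to proving the congruences $\nabla^m(c(\e,t-a))\equiv e_m\pmod{\mathfrak{m}\M}$ for $0\le m\le n-1$, for then $P\equiv\mathrm{Id}\pmod{\mathfrak{m}}$ and $\det P\equiv 1$.

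First I would rewrite the cyclic vector in a more transparent form. Reindexing the double sum in \eqref{cyclic vector} by $i=j-k$ and $l=j-i$, and using $\binom{i+l}{i}/(i+l)!=1/(i!\,l!)$, one obtains
\[
c(\e,t-a)=\sum_{i=0}^{n-1}\frac{s^i}{i!}\,E_i,\qquad E_i:=\sum_{l=0}^{n-1-i}\frac{(-s)^l}{l!}\,\nabla^l(e_i).
\]
Each $E_i$ is a truncation of the formal horizontal section $\sum_{l\ge 0}\frac{(-s)^l}{l!}\nabla^l(e_i)$, which is killed by $\nabla$ precisely because $d(s)=1$. For the truncation the cancellation telescopes up to a single boundary term, yielding the clean identity
\[
\nabla(E_i)=\frac{(-s)^{\,n-1-i}}{(n-1-i)!}\,\nabla^{\,n-i}(e_i),
\]
so that $\nabla(E_i)$ is divisible by $s^{\,n-1-i}$. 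Heuristically, were the $E_i$ genuinely horizontal, $\nabla$ would act on $\sum_i\frac{s^i}{i!}E_i$ simply by shifting $i\mapsto i+1$, and $\nabla^m(c(\e,t-a))$ would reduce to $E_m$, hence to $e_m$, modulo $s$.

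To make this rigorous I would expand $\nabla^m\!\big(\tfrac{s^i}{i!}E_i\big)$ by Leibniz as $\sum_{p}\binom{m}{p}\,d^p\!\big(\tfrac{s^i}{i!}\big)\,\nabla^{m-p}(E_i)$ and bound the order of vanishing in $s$ of each contribution. The terms with $p=m$ produce exactly the shifted main terms $\sum_{j\ge 0}\frac{s^j}{j!}E_{j+m}$, whose reduction modulo $\mathfrak{m}$ is $E_m\equiv e_m$. Using the identity above, one checks that every remaining term ($p<m$) is divisible by $s^{\,n-m}$, hence lies in $\mathfrak{m}\M$ as soon as $m\le n-1$; this dependence on the truncation is the only delicate point and is the step I expect to require the most care, since a careless estimate would let a residual term survive modulo $\mathfrak{m}$. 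Granting this bookkeeping, the congruences $\nabla^m(c(\e,t-a))\equiv e_m\pmod{\mathfrak{m}\M}$ follow, $\det P$ is a unit of the local ring $\Ba$, and $c(\e,t-a)$ is cyclic. (The case $n=1$ is trivial, since then $c(\e,t-a)=e_0$.)
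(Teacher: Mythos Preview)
Your argument is correct: the reorganisation $c(\e,t-a)=\sum_i\frac{s^i}{i!}E_i$ with $E_i$ a truncated horizontal section, the identity $\nabla(E_i)=\frac{(-s)^{n-1-i}}{(n-1-i)!}\nabla^{n-i}(e_i)$, and the divisibility count (each $p<m$ contribution lies in $s^{n-m}\M$) all check out. So you do obtain $\nabla^m(c(\e,t-a))\equiv e_m\pmod{\mathfrak m\M}$ and hence that the base change matrix is congruent to the identity modulo $\mathfrak m$.

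This is exactly the conclusion the paper reaches, but the paper gets there in one line rather than by your explicit computation. The paper works universally over $\Ba[X]$ with $d(X)=1$ and observes that the vector $c(\e,X)$ is \emph{defined} by the requirement that the constant terms of $c_0,\nabla(c_0),\ldots,\nabla^{n-1}(c_0)$ be $e_0,\ldots,e_{n-1}$ (this is the inversion formula preceding the statement). Hence the base change matrix $H(X)$ satisfies $H(0)=\mathrm{Id}$ automatically, so $P(X):=\det H(X)$ has $P(0)=1$; specialising $X\mapsto t-a$ (legitimate since $d(t-a)=1$) gives $P(t-a)=1+(t-a)Q(t-a)$, a unit because $t-a\in\mathfrak m$. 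Your computation re-proves $H(t-a)\equiv\mathrm{Id}\pmod{(t-a)}$ from scratch, without invoking the universal $X$; the truncated-horizontal-section picture is a nice way to see \emph{why} Katz's formula works, but it is more labour than the paper's route, where the desired congruence is the very design principle of $c(\e,X)$.
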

The arguments of the Katz's proofs are the following. 
We consider the polynomial ring $\Ba[X]$ and we extend the derivation of $\Ba$ by $d(X)=1$. 
We denote again by $\nabla$ the action of $d$ on $\M\otimes_{\Ba}\Ba[X]$ given by 
$\nabla\otimes\mathrm{Id}_{\Ba[X]}+\mathrm{Id}_{\M}\otimes d$.
Each element $c_0$ in $\M\otimes_{\Ba}\Ba[X]$ can be uniquely represented as 
$c_0:=\sum_{j\geq 0}c_{0,j} X^j$, with $c_{0,j}\in\M$ for $j=0,1,\ldots$. 
The derivatives $\nabla^i(c_0)$ of $c_0$ then have the same form $c_i:=\nabla^i(c_0)=
\sum_{j\geq 0}c_{i,j}X^j$, with $c_{i,j}=\sum_{k=0}^ik!\binom{j+k}{j}\binom{i}{k}\nabla^{i-k}(c_{0,j+k})$. 
\begin{equation}
\begin{scriptsize}
\begin{array}{rlrrrrrrrrrrr}
c_0\phantom{)}&=&c_{0,0}&+&c_{0,1}\cdot X&+&c_{0,2}\cdot 
X^2&+&\cdots&+&c_{0,n-1}\cdot X^{n-1}&+&\cdots\\
\nabla(c_0)&=&c_{1,0}&+&c_{1,1}\cdot X&+&c_{1,2}\cdot X^2&+&\cdots&+&c_{1,n-1}\cdot X^{n-1}&+&\cdots\\
\nabla^2(c_0)&=&c_{2,0}&+&c_{2,1}\cdot X&+&c_{2,2}\cdot X^2&+&\cdots&+&c_{2,n-1}\cdot X^{n-1}&+&\cdots\\
\cdots&&\cdots&&\cdots&&\cdots&&\cdots&&\cdots&&\cdots\\
\nabla^{n-1}(c_0)&=&c_{n-1,0}&+&c_{n-1,1}\cdot X&+&c_{n-1,2}\cdot X^2&+&\cdots&+&c_{n-1,n-1}\cdot 
X^{n-1}&+&\cdots\\
\cdots&&\cdots&&\cdots&&\cdots&&\cdots&&\cdots&&\cdots
\end{array}
\end{scriptsize}\end{equation}
The main point is now that, if $(n-1)!$ is invertible in $\Ba$, and if the degree (with respect to $X$) of $c_0$ 
is less or equal to $n-1$, 
then the $0$-components $\{c_{0,0},c_{1,0},\ldots,c_{n-1,0}\}$ of $\{c_0,\nabla(c_0),\ldots,\nabla^{n-1}(c_0)\}$ 
\emph{uniquely determine $c_0$}. In fact we have the inversion formula 
\begin{equation}
c_{0,j}\;:=\;\frac{1}{j!}\sum_{k=0}^j(-1)^{j-k}\binom{j}{k}\nabla^{j-k}(c_{k,0})\;,\quad j=0,\ldots,n-1\;.
\end{equation}
The idea is then to choose the $0$-components equal to the basis of $\M$: $c_{k,0}:=e_k$. 
We then obtain the vector \eqref{cyclic vector}:
\begin{equation}
c(\e,X)\;:=\;\sum_{j=0}^{n-1}\frac{X^j}{j!}\sum_{k=0}^j(-1)^k\binom{j}{k}\nabla^k(e_{j-k})\;.
\end{equation}
 This choice implies that the determinant of the base change 
is a polynomial $P(X)\in\Ba[X]$ verifying $P(0)=1$, because the matrix $H(X)\in M_n(\Ba[X])$ 
expressing $\{c_0,\nabla(c_0),\ldots,\nabla^{n-1}(c_0)\}$ in the basis $\e$ verifies $H(0)=\mathrm{Id}$. 
In other words $P(X)$ is invertible as a formal power series in $\Ba[[X]]$, so that $c_0$ is a cyclic 
vector for $\M\otimes_{\Ba}\Ba[[X]]$.

We now specialize $X$ into an element $t-a$ verifying $d(t-a)=1$, this guarantee that the 
specialization commutes with the action of the derivation. Let us come to the proof of the above results.
If $\Ba$ is local, and if $t-a$ belongs to the maximal ideal, 
then $P(t-a)$ is clearly invertible since it is of the form $P(t-a)=P(0)+(t-a)Q(t-a)=1+y$, 
with $y$ in the maximal ideal. 
\if{Alternatively, one specializes $\M\otimes_{\Ba}\Ba[[X]]\to\M\otimes_{\Ba}\Ba=\M$ by 
sending $X$ into $t-a$. Since $c_0$ is a cyclic vector for $\M\otimes_{\Ba}\Ba[[X]]$, then its specialization is a 
cyclic vector for $\M$.}\fi This proves theorem \ref{theorem 2 of Katz}. 
%
Notice that if $\Ba$ is a field of characteristic $0$, then $\Ba^{d=0}$ is an 
infinite field, hence there exists at least a constant $a\in\Ba^{d=0}$ such that $P(t-a)\neq 0$, 
this is enough to prove Deligne's Theorem \ref{deligne}.\footnote{Notice that Deligne does not ask for the existence of $t\in\Ba$ 
satisfying $d(t)=1$. But it is easy to reduce the general case to this one by replacing the non trivial 
derivation $d$ by $\widetilde{d}:=f\cdot d$, with $f:=d(t)^{-1}$, and then using Lemma \ref{change of derivation}.} 
%
Now we come to the proof of Theorem \ref{theorem 1 of Katz}. Katz proves that the ideal 
$\mathscr{I}$ 
of $\Ba$ generated by the values $\{P(t-a_i)\}_{i=0,\ldots,n(n-1)}$ is the unit ideal. He argues as follows. 
We observe that the polynomial $P(X)$ has degree $\leq n(n-1)$, since 
\begin{equation}
c_0\wedge\nabla(c_0)\wedge\ldots\wedge\nabla^{n-1}(c_0)\;=\;P(X)\cdot e_0\wedge e_1\wedge\cdots\wedge e_{n-1}
\end{equation}
and the $n$ vectors $c_0,\nabla(c_0),\ldots,\nabla^{n-1}(c_0)$ have all degree $\leq(n-1)$. So we write 
$P=\sum_{s=0}^{n(n-1)}r_sX^s$ and $P(t-a_i)=\sum_{s=0}^{n(n-1)}r_s(t-a_i)^s$. 
Now for $i\neq j$ one has $(t-a_i)-(t-a_j)=a_j-a_i\neq 0$ in $k$, 
so $(t-a_i)-(t-a_j)$ is invertible in $\Ba$. 
Hence the Van Der Monde matrix $V:=((t-a_i)^j)_{0\leq i,j\leq n(n-1)}$ is invertible 
because its determinant is $\prod_{0\leq i<j\leq n(n-1)}(a_j-a_i)$. This implies that the ideal $\mathscr{I}$ 
is equal to the ideal generated by the coefficients $r_0,\ldots,r_{n(n-1)}$.
\footnote{The ideal $\mathscr{I}$ is the set of linear combinations $\sum_{i=0}^{n(n-1)} 
b_iP(t-a_i)=\;^t\!w\cdot(P(t-a_i))_i$ with coefficients $b_i$ in $\Ba$. Since $V$ is invertible, 
for all vector $v$ with coefficients in $\Ba$, there exists $w$ such that $^tw\cdot V=\;^t\!v$ and reciprocally. 
So that any linear combination $^tw\cdot (P(t-a_i))_i$ of the family $\{P(t-a_i)\}_i$ is in fact a linear 
combination of the family $\{r_i\}_i$, because $^tw\cdot(P(t-a_i))_i=\,^t\!w\cdot V\cdot(r_i)_i=\,^t\!v\cdot(r_i)_i$, and reciprocally. So $\mathscr{I}$ is the 
ideal generated by the family $\{r_i\}_i$.} Since $r_0=1$, then $\mathscr{I}=\Ba$. This concludes the Katz's proofs.

\subsubsection{About the assumptions of Katz's Theorems.} 
The assumption about the existence of $t$ such that $d(t)=1$ is not completely 
constrictive. 
Indeed it is enough to assume the existence of an element $\widetilde{t}\in\mathscr{B}$ such that 
$d(\widetilde{t})=f$ is invertible in $\mathscr{B}$. Then we replace the derivation 
$d$ by $\widetilde{d}:=f^{-1}\cdot d$ in order to have $\widetilde{d}(\widetilde{t})=1$. 
We then consider the connection $\widetilde{\nabla}:=f^{-1}\cdot\nabla$ on $\M$, and 
we form the Katz's cyclic vector \eqref{cyclic vector} constructed from the data of 
$(\widetilde{d},\widetilde{t},\widetilde{\nabla})$. Then
\begin{lemma}\label{change of derivation}
The vector $c$ is a cyclic vector for the differential module $(\M,\nabla)$ over $(\mathscr{B},d)$ if and only if $c$ 
is a cyclic vector for  $(\M,f\cdot\nabla)$ over $(\mathscr{B},f\cdot d)$, for an arbitrary invertible element 
$f\in\mathscr{B}$.
\end{lemma}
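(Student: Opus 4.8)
The plan is to show that, for every $k$, the iterated operator $(f\nabla)^k$ expands as an $\Ba$-linear combination of $\nabla^0,\nabla^1,\ldots,\nabla^k$ whose top coefficient is exactly $f^k$. Since $f$ is invertible, this makes the passage from $\{c,\nabla(c),\ldots,\nabla^{n-1}(c)\}$ to $\{c,\widetilde{\nabla}(c),\ldots,\widetilde{\nabla}^{n-1}(c)\}$ a triangular base change with invertible determinant, so one family is an $\Ba$-basis of $\M$ if and only if the other is.

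First I would record that $\widetilde{\nabla}:=f\cdot\nabla$ really is a connection over $(\Ba,\widetilde{d})$ with $\widetilde{d}=f\cdot d$: applying the Leibniz rule for $\nabla$ and multiplying by $f$ gives $\widetilde{\nabla}(bm)=f\,d(b)\,m+b\,f\nabla(m)=\widetilde{d}(b)\,m+b\,\widetilde{\nabla}(m)$, as required, so that both sides of the statement are meaningful. Note also that the two conditions are symmetric, since $\nabla=f^{-1}\cdot\widetilde{\nabla}$ and $f^{-1}$ is again invertible; hence it suffices to produce an invertible base change in one direction.

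The technical heart is the following triangularity claim, proved by induction on $k$: there exist elements $P_{k,j}\in\Ba$ such that $\widetilde{\nabla}^k=\sum_{j=0}^{k}P_{k,j}\nabla^j$ as operators on $\M$, with $P_{k,k}=f^k$. The base case $k=0$ is trivial with $P_{0,0}=1=f^0$. For the inductive step one writes $\widetilde{\nabla}^{k+1}=f\nabla\circ\widetilde{\nabla}^k$ and applies Leibniz to each summand, using $f\nabla(P_{k,j}\nabla^j(\cdot))=f\,d(P_{k,j})\,\nabla^j+f\,P_{k,j}\,\nabla^{j+1}$. The only contribution of order $k+1$ comes from the term $j=k$, namely $f\,P_{k,k}\,\nabla^{k+1}=f\cdot f^{k}\,\nabla^{k+1}=f^{k+1}\nabla^{k+1}$, which yields $P_{k+1,k+1}=f^{k+1}$ and closes the induction. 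The lower coefficients $P_{k,j}$ for $j<k$ are explicit elements of $\Ba$ whose precise shape is irrelevant for the argument.

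Finally, evaluating at $c$ shows that the matrix $H=(P_{k,j})_{0\le k,j\le n-1}\in M_n(\Ba)$ expresses $\{c,\widetilde{\nabla}(c),\ldots,\widetilde{\nabla}^{n-1}(c)\}$ in terms of $\{c,\nabla(c),\ldots,\nabla^{n-1}(c)\}$. Because $H$ is lower triangular with diagonal entries $f^0,f^1,\ldots,f^{n-1}$, its determinant equals $f^{n(n-1)/2}$, which is invertible in $\Ba$ precisely because $f$ is. Thus $H\in\mathrm{GL}_n(\Ba)$, the two families differ by an invertible $\Ba$-linear change of coordinates, and one is a basis of $\M$ over $\Ba$ exactly when the other is. There is no serious obstacle in this argument; the only point needing care is to verify that the diagonal entries are genuinely the powers of $f$ (and not merely invertible by some coincidence), which is exactly what the induction on the top coefficient guarantees.
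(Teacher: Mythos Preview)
Your proof is correct and follows essentially the same approach as the paper: both establish that $(f\nabla)^k=f^k\nabla^k+\sum_{j<k}P_{k,j}\nabla^j$, so that the transition matrix between the two families is triangular with diagonal $(1,f,\ldots,f^{n-1})$ and hence invertible. Your version is slightly more explicit in carrying out the induction and in checking that $f\nabla$ is indeed a connection for $fd$, but the idea is identical.
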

\begin{proof}
It is enough to prove that if $c$ is cyclic with respect to $(\M,\nabla)$ then it is a cyclic vector with respect to 
$(\M,f\nabla)$. We have to prove that the base change matrix from the basis $\{c,\nabla(c),\ldots,\nabla^{n-1}(c)\}$ 
to the family $\{c,(f\nabla)(c),(f\nabla)^2(c),\ldots,(f\nabla)^{n-1}(c)\}$ is invertible. The Leibnitz rule of 
$\nabla$ gives the relation $\nabla\circ f=f\circ\nabla+d(f)$ where $f$ and $d(f)$ denote 
respectively the multiplication in $\M$ by $f\in\Ba$ and $d(f)\in\Ba$.
One sees then that $(f\nabla)^k=f^k\nabla^k+\sum_{0\leq i\leq k-1}\alpha_i(f)\nabla^i$, for convenient elements 
$\alpha_i(f)\in\Ba$. This implies that the base change matrix is triangular with $(1,f,f^2,\ldots,f^{n-1})$ in the 
diagonal.
\end{proof}

\begin{remark} 
The Katz's algorithm is not invariant under the above change of derivation. 
In other words the Katz's vector $c_0$ obtained from $(d,t,\nabla)$ does not 
coincide with the Katz's vector $\widetilde{c}_0$ constructed from 
$(\widetilde{d},\widetilde{t},\widetilde{\nabla})$.\footnote{Notice that 
$d(t)=\widetilde{d}(\widetilde{t})=1$. Once we change $d$ we also have to change $t$ in order to 
preserve  this relation.} 
If one of them is a cyclic vector, then it is simultaneously 
cyclic for $\nabla$ and $\widetilde{\nabla}$, thanks to the above lemma. 
But actually, in our knowledge, the fact that one of them is cyclic does 
not imply necessarily that the other is cyclic too.  
\end{remark}

\subsection{The Katz's base change matrix.}
We now investigate the explicit form of the base change matrix $H(X)$. For this we need to introduce some notation. 
If a basis $\e$ of $\M$ is fixed then we can associate to the $n$-times iterated connection 
$\nabla^n:=\nabla\circ\nabla\circ\cdots\circ\nabla$ a matrix 
$G_n=(g_{n;i,j})_{i,j=0,\ldots,n-1}\in M_n(\Ba)$ whose rows are the image of the basis $\e$ by $\nabla^n$: 
\begin{equation}  
\nabla^n(e_i):=\sum_{j=0}^{n-1}g_{n;i,j}\cdot e_j\;.
\end{equation}
\begin{proposition}\label{explicit H(X)}
The Katz's base change matrix $H(X)$ verifying $(\nabla^i(c_0(\e,X)))_i=H(X)(e_i)_i$ has the form
\begin{equation}\label{Katz's base change matrix}
H(X)\;:=\;H_0(X)+H_1(X)\cdot G_1+\cdots+H_{2n-2}(X)\cdot G_{2n-2}\;,
\end{equation}
where the matrices $H_s(X)$, $s=0,\ldots 2n-2$, all belong to $\mathbb{Z}[\frac{1}{(n-1)!}][X]$ and satisfy the following properties:
\begin{enumerate}
\item One has
\begin{equation}
H_0(X)\;=\;\left(\begin{smallmatrix}
1&X&\frac{X^2}{2}&\frac{X^3}{3!}&\cdots&\cdots&\frac{X^{n-1}}{(n-1)!}\\
0&1&X&\frac{X^2}{2}&\frac{X^3}{3!}&\cdots&\frac{X^{n-2}}{(n-2)!}\\
0&0&1&X&\frac{X^2}{2}&\cdots&\frac{X^{n-3}}{(n-3)!}\\
0&0&0&1&X&\cdots&\frac{X^{n-4}}{(n-4)!}\\
\cdots&\cdots&\cdots&\cdots&\cdots&\cdots&\cdots\\
\cdots&\cdots&\cdots&\cdots&\cdots&\cdots&\cdots\\
\cdots&\cdots&\cdots&\cdots&\cdots&\cdots&\cdots\\
0&0&0&\cdots&\cdots&1&X\\\\
0&0&0&0&\cdots&\cdots&1
\end{smallmatrix}
\right)
\end{equation}
\item If $H_s(X)=(h_{s;i,j}(X))_{i,j}$ then
\begin{equation}
h_{s;i,j}(X)\;=\;\alpha(s;i,j)\frac{X^{s+j-i}}{(s+j-i)!}
\end{equation}
with
\begin{equation}\label{alpha(s,i,j)}
\alpha(s;i,j)\;=\;
\epsilon_{s;i,j}\cdot\left[ 
\sum_{k=\max(0,s+j-(n-1))}^{\min(i,s)}(-1)^{s+k}\binom{s-k+j}{j}\binom{i}{k}\right]\in\mathbb{Z}
\end{equation}
where 
\begin{equation}
\epsilon_{s;i,j}\;=\;\left\{\begin{array}{rcl}
1&\textrm{ if }&(s,j)\in[0,n-1+i]\times[\max(0,i-s),\min(n-1,n-1+i-s)]\\
&&\\
0&\textrm{ if }&(s,j)\notin[0,n-1+i]\times[\max(0,i-s),\min(n-1,n-1+i-s)]
\end{array}\right.
\end{equation}
\if{$\varepsilon_{s;i,j}=1$ if $(s,j)\in[0,n-1+i]\times[\max(0,i-s),\min(n-1,n-1+i-s)]$, and 
$\epsilon_{s;i,j}=0$ otherwise.}\fi
\item In particular one has 
$h_{s;i,j}=0$ if $j-i$ does not belong to the interval $[\max(1-s,1-n),n-1-s]$.
\end{enumerate}
\end{proposition}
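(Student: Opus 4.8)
The plan is to compute $\nabla^i(c_0)$ completely explicitly and then to sort the resulting expression by the order of the derivative falling on each basis vector. I start from the expansion recalled above, namely $\nabla^i(c_0)=\sum_{j\ge 0}c_{i,j}X^j$ with $c_{i,j}=\sum_{k=0}^i k!\binom{j+k}{j}\binom{i}{k}\nabla^{i-k}(c_{0,j+k})$, and I substitute the closed form of the Katz coefficients, $c_{0,p}=\frac{1}{p!}\sum_{l=0}^{p}(-1)^{l}\binom{p}{l}\nabla^{l}(e_{p-l})$, coming from the inversion formula with $c_{k,0}=e_k$. Writing $\nabla^{i-k}\nabla^{l}=\nabla^{i-k+l}$ turns $c_{i,j}$ into a double sum of terms $\nabla^{\,i-k+l}(e_{\,j+k-l})$, and the crucial observation is that in every such term the order of the derivative plus the basis index equals $i+j$, independently of the summation variables. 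This suggests reindexing by the derivative order $s:=i-k+l$, so that $l=s-i+k$, the basis index is $a:=j+k-l=i+j-s$, and the attached monomial $X^j$ is forced by $s$ and $a$ through $j=s+a-i$.

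Expanding $\nabla^{s}(e_a)=\sum_l g_{s;a,l}e_l$ (the rows of $G_s$) and collecting by $s$, I obtain the decomposition $H(X)=\sum_{s\ge 0}H_s(X)\,G_s$ with entry
\[
(H_s(X))_{i,a}\;=\;X^{\,s+a-i}\sum_{k} k!\binom{j+k}{j}\binom{i}{k}\frac{(-1)^{l}}{(j+k)!}\binom{j+k}{l}, \qquad j=s+a-i,\ l=s-i+k.
\]
The factorial identity $k!\binom{j+k}{j}/(j+k)!=1/j!$ collapses the prefactor to $\frac{1}{(s+a-i)!}$, producing exactly the announced shape $\alpha(s;i,a)\,X^{s+a-i}/(s+a-i)!$ with $\alpha(s;i,a)=\sum_{k}(-1)^{l}\binom{j+k}{l}\binom{i}{k}\in\mathbb{Z}$. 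To match the stated closed form \eqref{alpha(s,i,j)} I use the complementation $\binom{j+k}{l}=\binom{s+a-i+k}{a}$, the reindexing $k\mapsto i-k$, and the parity identity $(-1)^{s-i+k}=(-1)^{s+i-k}$; together these turn my sum into $\sum_{k}(-1)^{s+k}\binom{s-k+a}{a}\binom{i}{k}$, which is precisely the bracketed quantity of the proposition.

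It remains to pin down the support, i.e. the factor $\epsilon_{s;i,j}$ and the summation limits, which yields parts (1) and (3) as well. These come purely from index ranges: $0\le k\le i$ from $\binom{i}{k}$, non‑negativity of the derivative order $s$ and of the exponent $s+j-i$ of $X$, and, most importantly, the degree truncation. Since $c_0$ has $X$‑degree $\le n-1$ and $\nabla$ does not raise the $X$‑degree (on $X^j m$ it gives $jX^{j-1}m+X^j\nabla m$), one has $c_{i,p}=0$ for $p\ge n$; through $j=s+a-i$ this forces $s+j-i\le n-1$, i.e. $j-i\le n-1-s$, while $0\le j\le n-1$ and $i\le n-1$ give $j-i\ge 1-n$, and the vanishing of the constant ($X^{0}$) term for $s\ge 1$ sharpens the lower bound to $1-s$; this is part (3), and since then $s+j-i\le n-1$ it also shows $H_s(X)\in\mathbb{Z}[\tfrac{1}{(n-1)!}][X]$. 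The same degree constraint and $l\ge 0$ produce the limits $\max(0,s+j-(n-1))\le k\le\min(i,s)$ after the reflection $k\mapsto i-k$. Specializing $s=0$ gives $G_0=\mathrm{Id}$ and $\alpha(0;i,j)=1$ for $i\le j\le n-1$ (only $k=0$ survives), which is the upper‑triangular matrix $H_0(X)$ of part (1); and composing the at most $n-1$ derivatives inside $c_0$ with the $\le n-1$ further applications of $\nabla$ caps $s$ at $2n-2$, so the sum stops at $G_{2n-2}$.

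I expect the main obstacle to be this last step rather than the algebra: keeping exact track of which terms drop out—basis indices leaving $[0,n-1]$, negative derivative or $X$‑exponents, and the truncation $c_{i,p}=0$ for $p\ge n$—so that the summation bounds and the rectangle defining $\epsilon_{s;i,j}$ emerge with the correct endpoints, and in particular reconciling the diagonal of $H_0$ (where $j-i=0$) with the generic lower bound $1-s$ valid for $s\ge 1$. The binomial manipulations themselves are routine once the reindexing $s=i-k+l$ is in place.
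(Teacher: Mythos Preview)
Your argument is correct and follows essentially the same route as the paper: expand $\nabla^{i}(c_0(\e,X))$ explicitly, reindex by the order $s$ of the derivative hitting each basis vector, and read off the coefficient matrices $H_s(X)$. The only cosmetic difference is that the paper applies the Leibniz rule directly to $c_0=\sum_m \frac{X^m}{m!}\sum_j(-1)^{m-j}\binom{m}{j}\nabla^{m-j}(e_j)$ and substitutes $s=m-j+k$, which lands on the stated $\alpha(s;i,j)$ without the extra reflection $k\mapsto i-k$ you perform; your detour through the $c_{i,j}$ formula is equivalent but one step longer. Your justification of the lower bound $j-i\ge 1-s$ via ``the constant term vanishes for $s\ge 1$'' is exactly the design property $c_{i,0}=e_i$, and it would be worth saying so explicitly, since this is what forces $\alpha(s;i,i-s)=0$ for $s\ge 1$ rather than any purely combinatorial miracle.
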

\begin{proof}
Applying $\nabla^i$ to the vector $c_0(\e,X)$, and re-summing by setting $s:=m+k-j$ one obtains
\begin{eqnarray}
\nabla^i(c_0(\e,X))&\;=\;&\sum_{m=0}^{n-1}\sum_{j=0}^{m}\sum_{k=0}^i(-1)^{m-j}\binom{m}{j}\binom{i}{k}
d^{i-k}(\frac{X^m}{m!})\nabla^{m-j+k}(e_j)\\
&=&\sum_{s=0}^{n-1+i}\quad\sum_{j=\max(0,i-s)}^{\min(n-1,n-1+i-s)}\alpha(s;i,j)\frac{X^{s+j-i}}{(s+j-i)!}
\nabla^s(e_j)\;,
\end{eqnarray}
where $\alpha(s;i,j)$ is
\begin{equation}
\alpha(s;i,j)\;:=\;\left[ 
\sum_{k=\max(0,s+j-(n-1))}^{\min(i,s)}(-1)^{s-k}\binom{s-k+j}{j}\binom{i}{k}\right]\in\mathbb{Z}\;.
\end{equation} 
In matrix form, if $H_s(X)=(h_{s;i,j}(X))_{i,j=0,\ldots,n-1}$, then 
\begin{eqnarray}
(\nabla^i(c_0(\e,X)))_i&\;=\;&\Bigl(\sum_{s=0}^{2n-2}H_s(X)G_s\Bigr)\cdot(e_i)_i
\;=\;\sum_{s=0}^{2n-2}\Bigl(H_s(X)G_s\Bigr)\cdot(e_i)_i\\
&\;=\;&\sum_{s=0}^{2n-2}\Bigl(\sum_{j=0}^{n-1}h_{s;i,j}(X)g_{s;j,k}\Bigr)_{i,k}\cdot(e_i)_i
\;=\;\sum_{s=0}^{2n-2}\Bigl(\sum_{k=0}^{n-1}
\sum_{j=0}^{n-1}h_{s;i,j}(X)g_{s;j,k}e_k\Bigr)_{i}\qquad\\
&\;=\;&\sum_{s=0}^{2n-2}\Bigl(
\sum_{j=0}^{n-1}h_{s;i,j}(X)(\sum_{k=0}^{n-1}g_{s;j,k}e_k)\Bigr)_{i}\;=\;
\sum_{s=0}^{2n-2}\Bigl(
\sum_{j=0}^{n-1}h_{s;i,j}(X)\nabla^s(e_j)\Bigr)_{i}\;\;.\qquad
\end{eqnarray}
So that $\nabla^i(c_0(\e,X))=\sum_{s=0}^{2n-2}\sum_{j=0}^{n-1}h_{s;i,j}(X)\nabla^s(e_j)$. This means that 
\begin{equation}
h_{s;i,j}(X)\;=\;\alpha(s;i,j)\cdot\frac{X^{s+j-i}}{(s+j-i)!}\;.
\end{equation}
\end{proof}
Below we write the first examples of $H(X)$ for $n=2,3,4,5$.
\begin{eqnarray*}
&&\\
\!\!\!\!\!\!\!\!\!\!\!\!\!\!\!\!\!\!\!\!\!\!\!\!\!\!\!\!\!\!\!\!\!\!\!\!\!\!\!\!\!\!\!\!\!\!\!\!
n=2&:&\\ 
H(X)&=&\sm{1&X\\0&1}+
\sm{-X&0\\0&X}G_1+
\sm{0&0\\-X&0}G_2\\
&&\nonumber\\
&&\nonumber\\
n=3&:&\\
H(X)&=&
\sm{1&X&\frac{X^2}{2!}
\\0&1&X\\\\
0&0&1}+
\sm{-X&-2\frac{X^2}{2}&0\\
0&-X&-\frac{X^2}{2}\\
0&0&2X}G_1+
\sm{\frac{X^2}{2}&0&0\\0&-2\frac{X^2}{2}&0\\0&-3X&\frac{X^2}{2}} G_2+
\sm{0&0&0\\\frac{X^2}{2}&0&0\\X&-2\frac{X^2}{2}&0\\}G_3+\sm{0&0&0\\0&0&0\\\frac{X^2}{2}&0&0\\}G_4\\
&&\nonumber\\
&&\nonumber\\
n=4&:&\\ 
H(X)&=&
\sm{1&X&\frac{X^2}{2!}&\frac{X^3}{3!}\\
0&1&X&\frac{X^2}{2!}\\
0&0&1&X\\
\\0&0&0&1}+
\sm{
-X&-2\frac{X^2}{2}&-3\frac{X^3}{3!}&0\\
0&-X&-2\frac{X^2}{2}&\frac{X^3}{3!}\\
0&0&-X&2\frac{X^2}{2}\\
0&0&0&3X\\
}G_1+
\sm{
\frac{X^2}{2}&3\frac{X^3}{3!}&0&0\\
0&\frac{X^2}{2}&-3\frac{X^3}{3!}&0\\
0&0&-5\frac{X^2}{2}&\frac{X^3}{3!}\\
0&0&-6X&3\frac{X^2}{2}\\
}G_2+\\
&&\sm{
-\frac{X^3}{3!}&0&0&0\\
0&3\frac{X^3}{3!}&0&0\\
0&4\frac{X^2}{2}&-3\frac{X^3}{3!}&0\\
0&4X&-8\frac{X^2}{2}&\frac{X^3}{3!}\\
}G_3+
\sm{0&0&0&0\\
-\frac{X^3}{3!}&0&0&0\\
-\frac{X^2}{2}&3\frac{X^3}{3!}&0&0\\
-X&7\frac{X^2}{2}&-3\frac{X^3}{3!}&0
}G_4+
\sm{0&0&0&0\\0&0&0&0\\
-\frac{X^3}{3!}&0&0&0\\-2\frac{X^2}{2}&3\frac{X^3}{3!}&0&0\\}G_5+
\sm{0&0&0&0\\0&0&0&0\\0&0&0&0\\-\frac{X^3}{3!}&0&0&0\\}G_6\\
&&\nonumber\\
&&\nonumber\\
n=5&:&\\ 
H(X)&=&\sm{1&X&\frac{X^2}{2!}&\frac{X^3}{3!}&\frac{X^4}{4!}\\
0&1&X&\frac{X^2}{2!}&\frac{X^3}{3!}\\0&0&1&X&\frac{X^2}{2}\\
0&0&0&1&X&\\
\\0&0&0&0&1}+
\sm{
-X&-2\frac{X^2}{2!}&-3\frac{X^3}{3!}&-4\frac{X^4}{4!}&0\\
0&-X&-2\frac{X^2}{2!}&-3\frac{X^3}{3!}&\frac{X^4}{4!}\\
0&0&-X&-2\frac{X^2}{2!}&2\frac{X^3}{3!}\\
0&0&0&-X&3\frac{X^2}{2!}\\
0&0&0&0&4X}G_1+
\sm{
\frac{X^2}{2!}&3\frac{X^3}{3!}&6\frac{X^4}{4!}&0&0\\
0&\frac{X^2}{2!}&3\frac{X^3}{3!}&-4\frac{X^4}{4!}&0\\
0&0&\frac{X^2}{2!}&-7\frac{X^3}{3!}&\frac{X^4}{4!}\\
0&0&0&-9\frac{X^2}{2!}&3\frac{X^3}{3!}\\
0&0&0&-10X&6\frac{X^2}{2!}}G_2+\\
&&
\sm{-\frac{X^3}{3!}&-4\frac{X^4}{4!}&0&0&0\\
0&-\frac{X^3}{3!}&6\frac{X^4}{4!}&0&0\\
0&0&9\frac{X^3}{3!}&-4\frac{X^4}{4!}&0\\
0&0&10\frac{X^2}{2!}&-11\frac{X^3}{3!}&\frac{X^4}{4!}\\
0&0&10X&-20\frac{X^2}{2!}&4\frac{X^3}{3!}
}G_3+
\sm{\frac{X^4}{4!}&0&0&0&0\\
0&-4\frac{X^4}{4!}&0&0&0\\
0&-5\frac{X^3}{3!}&6\frac{X^4}{4!}&0&0\\
0&-5\frac{X^2}{2!}&15\frac{X^3}{3!}&-4\frac{X^4}{4!}&0\\
0&-5X&25\frac{X^2}{2!}&-13\frac{X^3}{3!}&\frac{X^4}{4!}\\
}G_4+\\&&
\sm{0&0&0&0&0\\
\frac{X^4}{4!}&0&0&0&0\\
\frac{X^3}{3!}&-4\frac{X^4}{4!}&0&0&0\\
\frac{X^2}{2!}&-9\frac{X^3}{3!}&6\frac{X^4}{4!}&0&0\\
X&-14\frac{X^2}{2!}&21\frac{X^3}{3!}&-4\frac{X^4}{4!}&0\\
}G_5+
\sm{0&0&0&0&0\\0&0&0&0&0\\
\frac{X^4}{4!}&0&0&0&0\\
2\frac{X^3}{3!}&-4\frac{X^4}{4!}&0&0&0\\
3\frac{X^2}{2!}&-13\frac{X^3}{3!}&6\frac{X^4}{4!}&0&0\\
}G_6+
\sm{0&0&0&0&0\\0&0&0&0&0\\0&0&0&0&0\\
\frac{X^4}{4!}&0&0&0&0\\
3\frac{X^3}{3!}&-4\frac{X^4}{4!}&0&0&0\\
}G_7+\\
&&\sm{0&0&0&0&0\\0&0&0&0&0\\0&0&0&0&0\\0&0&0&0&0\\
\frac{X^4}{4!}&0&0&0&0\\}G_8
\end{eqnarray*}
\section{Small connections are cyclic over an ultrametric Banach algebra.}
In this section we provide a sufficient condition for differential modules over an ultrametric Banach algebras, 
in order to guarantee that the Katz's vector \eqref{cyclic vector} is a cyclic vector. 

\subsection{Norms and matrices} 
We recall that an ultrametric norm on a commutative ring with unit $\Ba$ is a map 
$|.|:\Ba\to\mathbb{R}_{\geq 0}$ satisfying $|0|=0$, $|1|=1$, $|a+b|\leq\max(|a|,|b|)$, 
$|ab|\leq|a|\cdot|b|$, for all $a,b\in \Ba$. We require moreover $|na|=|n||a|$ for all $n\in\mathbb{Z}$, $a\in\Ba$.  
Hence, in particular, the norm on $\mathbb{Z}$ induced by $|.|$ is ultrametric and so $|n|\leq 1$ for all 
$n\in\mathbb{Z}$. If $\Ba$ is complete an separated\footnote{$\Ba$ is separated if and only if $|a|=0$ implies $a=0$.} 
with respect to $|.|$ then we say that $\Ba$ is 
an \emph{ultrametric Banach algebra}. 
A norm on $M_n(\Ba)$ is a map $\|.\|:M_n(\Ba)\to\mathbb{R}_{\geq 0}$ satisfying $\|0\|=0$, $\|1\|=1$, 
$\|A+B\|\leq\max(\|A\|,\|B\|)$, $\|AB\|=\|A\|\cdot\|B\|$, $\|bA\|=|b|\|A\|$ for all $b\in\Ba$, $A,B\in M_n(\Ba)$. 
In the sequel we will consider on $M_n(\Ba)$ two norms 
\begin{eqnarray}
\textrm{sup-norm: }
\qquad|(a_{i,j})|\phantom{^{(\rho)}}&\;:=\;&\sup_{i,j}|a_{i,j}|\;,\label{norm sup on matrices}\\
\rho\textrm{-sup-norm: }
\qquad|(a_{i,j})|^{(\rho)}&\;:=\;&\sup_{i,j}|a_{i,j}|\rho^{j-i}\;,\qquad\qquad\rho>0\;.\label{norm rho on 
matrices}
\end{eqnarray}
Notice that if $\mathscr{C}$ is a $\Ba$-algebra together with a norm $|.|_{\mathscr{C}}$ 
extending\footnote{i.e. in order that the structural morphism $\Ba\to\mathscr{C}$ is an isometry} 
that of $\Ba$, and if $c\in\mathscr{C}$ is an element with norm $|c|=\rho^{-1}$, 
then $|A|^{(\rho)}=|\Lambda_c^{-1}A\Lambda_c|$, for all $A\in M_n(\Ba)$, where $\Lambda_c$ is 
the diagonal matrix with diagonal equal to $(1,c,c^2,\ldots,c^{n-1})$.
 
\if{
If $\mathscr{C}$ is a $\Ba$-algebra together with a norm $|.|_{\mathscr{C}}$ 
extending\footnote{i.e. in order that the structural morphism $\Ba\to\mathscr{C}$ is an isometry} 
that of $\Ba$, then $GL_n(\mathscr{C})$ acts naturally on the set $\mathscr{M}(M_n(\Ba))$ of norms on $M_n(\Ba)$. The 
action is given by 
\begin{equation}
|.|^{(L)}(A)\;:=\;|L^{-1}AL|\;,
\end{equation}

----------

NON E' COSI FACILE DA DIRE... 
PERCHE' bisogna mostrare prima che la norma $|.|\in\mathscr{M}(M_n(\Ba))$ che sto considerando 
qui si estende a $\mathscr{C}$.

----------

where $L\in GL_n(\mathscr{C})$, and $A\in M_n(\Ba)$. In fact the norm $|.|^{(\rho)}$ 
(cf. \eqref{norm rho on matrices}) is in the orbit of $|.|$ (cf. \eqref{norm sup on matrices}), because if 
$c\in\mathscr{C}$ is an element with norm $|c|=\rho^{-1}$, then $|A|^{(\rho)}=|A|^{(\Lambda_c)}$ where $\Lambda_c$ is 
the diagonal matrix with $(1,c,c^2,\ldots,c^{n-1})$ in the diagonal. More generally we notice that all ring 
endomorphisms $\phi:\mathscr{C}\to\mathscr{C}$ gives rise to a norm by $|A|^{(\phi)}:=|\phi(A)|$.

----------

STESSA OSSERVAZIONE QUI...

----------
}\fi

\subsubsection{Norm of derivation} 
Let $(\B,|.|)$ be an ultrametric Banach algebra, and let $\|.\|:\M_n(\Ba)\to\mathbb{R}_{\geq 0}$ be a fixed norm. 
Let now $d:\Ba\to\Ba$ be a continuous derivation. We extend $d$ to $M_n(\Ba)$ by 
$d((a_{i,j})_{i,j}):=(d(a_{i,j}))_{i,j}$. Let $|d|$ denotes the norm operator of $d$ acting on $\Ba$: 
\begin{equation}
|d|\;:=\;\sup_{b\neq 0,b\in\Ba}\frac{|d(b)|}{|b|}\;.
\end{equation}
We will always assume that the norm $\|.\|$ verifies 
\begin{equation}\label{compatibility with d}
\|d(A)\|\;\leq\;|d|\cdot\|A\|
\end{equation} for all $A\in M_n(\Ba)$. This holds for the sup-norm and the $\rho$-sup-norm.
\subsection{Norm of the matrix of the connection and cyclic vectors}
We consider as above an ultrametric Banach algebra $(\Ba,|.|)$, together with a continuous derivation $d:\Ba\to\Ba$.
Let $\|.\|:\M_n(\Ba)\to\mathbb{R}_{\geq 0}$ be a fixed norm 
satisfying \eqref{compatibility with d}, for which 
$M_n(\Ba)$ is complete and separated. Let $(\M,\nabla)$ be a differential module. We assume 
that there is a element $t\in\Ba$ such that $d(t)=1$. 
In order to consider the Katz's base change matrix \eqref{Katz's base change matrix} we assume that $(n-1)!$ is 
invertible in $\Ba$.
As in the above sections we denote by $G_n$ the matrix of the $n$-th iterated 
connection $\nabla^n:\M\to\M$ with respect to a basis $\e$. The simple idea of this section is the following. 
\begin{lemma}
If the matrices 
$G_1,\ldots,G_{2n-2}$ are small enough in norm, in order to verify 
\begin{equation}\label{small norm implies invertible}
\|H_0(-t)H_s(t)G_s\|<1
\end{equation} for all $s=1,\ldots,2n-2$, 
then the Katz's base change matrix 
\begin{equation}
H(t)\;:=\;H_0(t)+H_1(t)G_1+\cdots+H_{2n-2}G_{2n-2}
\end{equation} 
is invertible. 
\end{lemma}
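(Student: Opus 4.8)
The plan is to factor the matrix $H(t)$ of the lemma as a product of an obviously invertible matrix and a matrix of the form $\mathrm{Id}+E$ with $\|E\|<1$, and then to invert the second factor by a Neumann series, using the completeness of $M_n(\Ba)$.

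First I would observe that the matrix $H_0(X)$ described in part $(1)$ of Proposition \ref{explicit H(X)} is exactly the exponential of the nilpotent shift. If $N\in M_n(\mathbb{Z})$ denotes the matrix with $1$'s on the first superdiagonal and $0$ elsewhere, then $(N^k)_{i,j}=1$ precisely when $j=i+k$, so that $H_0(X)=\sum_{k\geq 0}\frac{X^k}{k!}N^k$ (a finite sum, since $N^n=0$). Consequently $H_0(X)H_0(-X)=\mathrm{Id}$, and therefore $H_0(t)$ is invertible with $H_0(t)^{-1}=H_0(-t)$. Equivalently, one checks this directly through the Vandermonde-type convolution $\sum_{p}\binom{m}{p}t^p(-t)^{m-p}=(t-t)^m$, which vanishes for $m\geq 1$ and equals $1$ for $m=0$, exactly reproducing the entries of the identity.

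Next I factor. Pulling $H_0(t)$ out on the left of $H(t)=H_0(t)+\sum_{s=1}^{2n-2}H_s(t)G_s$ gives
\begin{equation*}
H(t)\;=\;H_0(t)\Bigl(\mathrm{Id}+\sum_{s=1}^{2n-2}H_0(-t)H_s(t)G_s\Bigr)\;=\;H_0(t)(\mathrm{Id}+E),
\end{equation*}
where I set $E:=\sum_{s=1}^{2n-2}H_0(-t)H_s(t)G_s$. By the ultrametric property of $\|.\|$ together with the hypothesis \eqref{small norm implies invertible}, one gets $\|E\|\leq\max_{1\leq s\leq 2n-2}\|H_0(-t)H_s(t)G_s\|<1$.

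Finally, since $M_n(\Ba)$ is complete and separated and $\|E^k\|\leq\|E\|^k\to 0$, the series $\sum_{k\geq 0}(-E)^k$ converges (in a complete ultrametric module a series converges as soon as its terms tend to $0$) and, by the usual telescoping, furnishes a two-sided inverse of $\mathrm{Id}+E$. Hence $H(t)=H_0(t)(\mathrm{Id}+E)$ is a product of two invertible matrices and is itself invertible. The one point that carries the argument is the identity $H_0(t)^{-1}=H_0(-t)$: this is precisely what makes the combination $H_0(-t)H_s(t)G_s$ appearing in the hypothesis the natural quantity to control, and it is the only place where the explicit shape of $H_0$ from Proposition \ref{explicit H(X)} is needed. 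Everything else is the standard ultrametric Neumann-series inversion, so I expect no further obstacle.
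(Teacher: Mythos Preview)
Your proof is correct and follows exactly the same route as the paper: the paper observes that $H_0(t)^{-1}=H_0(-t)$ and then writes $H_0(t)^{-1}H(t)=1+\sum_{s=1}^{2n-2}H_0(-t)H_s(t)G_s$, leaving the Neumann-series inversion implicit. Your write-up simply supplies the details (the exponential-of-nilpotent description of $H_0$, the ultrametric bound on $E$, and the convergence of the geometric series) that the paper takes for granted.
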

\begin{proof}
Indeed $H_0(t)$ is always invertible with inverse 
\begin{equation}
H_0(t)^{-1}\;=\;H_0(-t)\;.
\end{equation} 
So that $H(t)$ is invertible if and only if 
$H_0(t)^{-1}H(t)=1+\sum_{s=1}^{2n-2}H_0(-t)H_s(t)G_s$ is invertible. 
\end{proof} 
Of course a sufficient condition to have \eqref{small norm implies invertible} is 
\begin{equation}\label{condition ....}
\|G_s\|<(\|H_s(t)\|\cdot\|H_0(-t)\|)^{-1}\;.
\end{equation}
In the following subsection we provide an explicit upper bound on the sup-norm 
and on the $\rho$-sup-norm of 
$G:=G_1$ sufficient to guarantee \eqref{condition ....} for all $s=1,\ldots,2n-2$. In order to do that we relate the 
norm of $G_s$ with that of $G_1$ by the following
\begin{lemma}\label{G_s+1 les G_1^s}
For all $s\geq 1$ one has
\begin{equation}
\|G_s\|\;\leq\;\|G_1\|\cdot\sup(\|G_1\|,|d|)^{s-1}\;.
\end{equation}
\end{lemma}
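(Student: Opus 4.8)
The plan is to reduce everything to a single recurrence relating the matrix $G_{s+1}$ of $\nabla^{s+1}$ to those of $\nabla^s$ and of $\nabla$, and then to proceed by induction on $s$. First I would compute $\nabla^{s+1}(e_i)=\nabla(\nabla^s(e_i))$. Writing $\nabla^s(e_i)=\sum_j g_{s;i,j}e_j$ and applying the Leibnitz rule $\nabla(bm)=d(b)m+b\nabla(m)$ term by term, together with $\nabla(e_j)=\sum_k g_{1;j,k}e_k$, one obtains
\[
g_{s+1;i,k}\;=\;d(g_{s;i,k})+\sum_{j=0}^{n-1}g_{s;i,j}\,g_{1;j,k}\;,
\]
which in matrix form is exactly
\[
G_{s+1}\;=\;d(G_s)+G_sG_1\;,
\]
where $d(G_s)$ denotes the entrywise application of $d$. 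This identity is the heart of the argument.

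The induction then runs as follows. For $s=1$ the claimed inequality reads $\|G_1\|\le\|G_1\|$, which holds trivially. Assuming $\|G_s\|\le\|G_1\|\cdot\sup(\|G_1\|,|d|)^{s-1}$, I would apply the ultrametric inequality $\|A+B\|\le\max(\|A\|,\|B\|)$ to the recurrence to get
\[
\|G_{s+1}\|\;\le\;\max\bigl(\|d(G_s)\|,\;\|G_sG_1\|\bigr)\;.
\]
For the first term I would use the compatibility hypothesis \eqref{compatibility with d}, namely $\|d(G_s)\|\le|d|\cdot\|G_s\|$, and for the second the submultiplicativity $\|G_sG_1\|\le\|G_s\|\cdot\|G_1\|$. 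Since both $|d|$ and $\|G_1\|$ are bounded above by $\sup(\|G_1\|,|d|)$, inserting the inductive hypothesis shows that each of the two terms is at most $\|G_1\|\cdot\sup(\|G_1\|,|d|)^{s}$; taking the maximum yields the desired bound at step $s+1$.

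The only genuinely substantive point is the derivation of the recurrence $G_{s+1}=d(G_s)+G_sG_1$: one must keep careful track of the fact that the coefficients $g_{s;i,j}$ live in $\Ba$ and are therefore differentiated by $d$, whereas the basis vectors $e_j$ are acted on by $\nabla$, so that a single application of $\nabla$ splits into the two summands above. Once this is in place the remaining estimate is a routine consequence of the ultrametric triangle inequality and of the two standing assumptions on the norm (compatibility with $d$ and submultiplicativity); in particular, no use of completeness or separatedness of $M_n(\Ba)$ is needed for this lemma.
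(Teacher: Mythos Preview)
Your proof is correct and follows exactly the same approach as the paper: derive the recursion $G_{s+1}=d(G_s)+G_sG_1$, use the compatibility $\|d(G_s)\|\le|d|\,\|G_s\|$ together with the ultrametric inequality to obtain $\|G_{s+1}\|\le\|G_s\|\cdot\max(|d|,\|G_1\|)$, and conclude by induction. The paper's proof is simply a terser version of what you wrote, omitting the explicit derivation of the recursion and the base case.
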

\begin{proof}
We have the recursive relation $G_{s+1}=d(G_s)+G_sG_1$. Since we assume $\|d(G_s)\|\leq|d|\|G_s\|$, then one easily has 
$\|G_{s+1}\|\leq \|G_s\|\cdot\max(|d|,\|G_1\|)$. By induction the lemma is proved.
\end{proof}

\subsection{Upper bound for the sup-norm.} 
Let now the chosen norm $\|.\|=|.|$ be the sup-norm 
\eqref{norm sup on matrices}. We are looking for a condition on $|G_1|$ that guarantee 
\begin{equation}\label{condition -t-}
|G_s|<(|H_s(t)|\cdot|H_0(-t)|)^{-1}\;,
\end{equation}
for all $s=1,\ldots,2n-2$. Thanks to Proposition \ref{explicit H(X)} one 
has\footnote{Notice that $|.|$ is not assumed 
to be multiplicative, hence $|t^i|\leq|t|^i$.}
\begin{eqnarray}
|H_0(t)|&\;=\;&|H_0(-t)|=\sup_{i=0,\ldots,n-1}|t^i|/|i!|\\
|H_s(t)|&\leq&|H_0(-t)|\;,\textrm{ for all }s=1,\ldots,2n-2\;.
\end{eqnarray}
Indeed since $\alpha(s;i,j)$ is an integer, and since the norm $|.|$ is ultrametric, one has $|\alpha(s;i,j)|\leq 1$. 
From this we have 
\begin{equation}
(|H_0(-t)||H_s(t)|)^{-1}\;\geq\;|H_0(t)|^{-2}\;.
\end{equation} 
On the other hand by Lemma \ref{G_s+1 les G_1^s} one has
\begin{equation}
|G_s|\;\leq\;|G_1|\cdot\max(|G_1|,|d|)^{s-1}\;.
\end{equation}
Hence it is enough to prove that
\begin{equation}\label{sufficient condition ro}
|G_1|\cdot\max(|G_1|,|d|)^{s-1} \; < \;|H_0(t)|^{-2}\;,
\end{equation}
for all $s=1,\ldots,2n-2$. 
\begin{proposition}
Assume that
\begin{equation}
|G_1|\;<\;|H_0(t)|^{-2}\cdot 
\min\Bigl(1,\frac{1}{|d|^{2n-3}}\Bigr)\;\;=\;\;
\min\Bigl(1,\frac{1}{|t|},\frac{|2|}{|t^2|},\ldots,\frac{|(n-1)!|}{|t^{n-1}|}\Bigr)^2\cdot
\min\Bigl(1,\frac{1}{|d|^{2n-3}}\Bigr)\;.
\end{equation}
Then $(\M,\nabla)$ is cyclic and the Katz's vector $c_0(\e,t)$ is a cyclic vector for $\M$.
\end{proposition}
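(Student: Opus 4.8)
The plan is to deduce the Proposition from the sufficient condition \eqref{sufficient condition ro} already isolated above: once we know that $|G_1|\cdot\max(|G_1|,|d|)^{s-1}<|H_0(t)|^{-2}$ holds for every $s=1,\ldots,2n-2$, the chain of implications \eqref{sufficient condition ro}$\Rightarrow$\eqref{condition -t-}$\Rightarrow$\eqref{small norm implies invertible} shows that $H(t)$ is invertible, so $c_0(\e,t)$ is a cyclic vector for $\M$. Thus the whole task reduces to checking that the bound assumed on $|G_1|$ forces \eqref{sufficient condition ro}.

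First I would record the elementary identity $|H_0(t)|^{-1}=\min_{0\le i\le n-1}|i!|/|t^i|=\min(1,1/|t|,|2|/|t^2|,\ldots,|(n-1)!|/|t^{n-1}|)$, which is simply the reciprocal of the value $|H_0(t)|=\sup_{0\le i\le n-1}|t^i|/|i!|$ computed above (using $|0!|=|t^0|=1$ for the term $i=0$). Squaring yields the second expression displayed in the statement, and since the $i=0$ term already shows $|H_0(t)|\ge 1$, we also obtain $|H_0(t)|^{-2}\le 1$, an inequality used repeatedly below.

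The core step is the verification of \eqref{sufficient condition ro} for all $s$, and I would organise it by the dichotomy that the factor $\min(1,1/|d|^{2n-3})$ is precisely designed to encode. If $|d|\le 1$, the hypothesis reads $|G_1|<|H_0(t)|^{-2}\le 1$, so both $|G_1|$ and $|d|$ are at most $1$; hence $\max(|G_1|,|d|)^{s-1}\le 1$ for every $s\ge 1$ and therefore $|G_1|\cdot\max(|G_1|,|d|)^{s-1}\le|G_1|<|H_0(t)|^{-2}$, as desired. If instead $|d|>1$, the hypothesis gives $|G_1|<|H_0(t)|^{-2}/|d|^{2n-3}<1<|d|$, so that $\max(|G_1|,|d|)=|d|$ and the left-hand side of \eqref{sufficient condition ro} equals $|G_1|\cdot|d|^{s-1}$; this is increasing in $s$ because $|d|>1$, hence maximal at $s=2n-2$, where it equals $|G_1|\cdot|d|^{2n-3}<|H_0(t)|^{-2}$ exactly by hypothesis.

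There is no serious obstacle here: the argument is a clean consequence of Lemma \ref{G_s+1 les G_1^s} together with the norm estimates $|H_s(t)|\le|H_0(-t)|$ coming from Proposition \ref{explicit H(X)} (which rest on $|\alpha(s;i,j)|\le 1$ for integer $\alpha$). The only point requiring genuine care is to notice that the two regimes $|d|\le 1$ and $|d|>1$ swap which value of $s$ is the worst case --- namely $s=1$ when $\max(|G_1|,|d|)\le 1$, and $s=2n-2$ when it exceeds $1$ --- and to observe that the single scalar $\min(1,1/|d|^{2n-3})$ in the hypothesis simultaneously dominates both; here the fact that $|d|>0$ (which holds because $d(t)=1$ forces $t\ne 0$ and hence $|d|\ge 1/|t|$) guarantees that $1/|d|^{2n-3}$ is well defined.
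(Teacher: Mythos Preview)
Your proof is correct. The only difference from the paper's argument is organisational: the paper does not split into the cases $|d|\le 1$ and $|d|>1$, but instead shows directly that the hypothesis forces $|G_1|<|d|$ in all cases. It observes that $|H_0(t)|^{-1}=\min(1,1/|t|,\ldots)\le 1/|t|$ (since $1/|t|$ is one of the terms in the minimum) and combines this with $|d||t|\ge 1$ and $|H_0(t)|^{-1}\le 1$ to get $|H_0(t)|^{-2}\le |d|$; together with $\min(1,1/|d|^{2n-3})\le 1$ this yields $|G_1|<|d|$ unconditionally. From there the paper rewrites \eqref{sufficient condition ro} as $|G_1|<|H_0(t)|^{-2}/|d|^{s-1}$ and notes that $\min_{1\le s\le 2n-2}1/|d|^{s-1}=\min(1,1/|d|^{2n-3})$, recovering exactly the hypothesis.

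Your dichotomy is a perfectly good alternative: it trades the small trick $|H_0(t)|^{-1}\le 1/|t|\le|d|$ for an explicit case analysis, and has the mild advantage of making transparent which value of $s$ is the worst case in each regime. The paper's route is slightly shorter and uniform.
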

\begin{proof}
We observe that both minimums are $\leq 1$, moreover 
$\min(1,1/|t|,|2|/|t^2|,\ldots,|(n-1)!|/|t^{n-1}|)\leq1/|t|$. 
Since $d(t)=1$, then $|d||t|\geq 1$, and hence 
$1/|t|\leq|d|$. Our assumption then implies $|G_1|<|d|$. Hence 
\eqref{sufficient condition ro} becomes $|G_1|\cdot|d|^{s-1}<|H_0(t)|^{-2}$ for all 
$s=1,\ldots,2n-2$. This inequality is fulfilled if and only if 
$|G_1|< \min_{s=1,\ldots,2n-2}|H_0(t)|^{-2}/|d|^{s-1}= 
|H_0(t)|^{-2}\cdot\min(1,1/|d|^{2n-3})$ which 
is our assumption.
\end{proof}

\subsection{Upper bound for the $\rho$-sup-norm with $\rho=|t|^{-1}$.}\label{section upper bound for rho=t}
We assume that 
\begin{equation}
\rho\;:=\;|t|^{-1}\;,\qquad|(a_{i,j})_{i,j}|^{(|t|^{-1})}\;=\;\sup_{i,j}|a_{i,j}||t|^{i-j}\;.
\end{equation}
As above we shall provide a condition on $G_1$ to guarantee
\begin{equation}\label{condition rho}
|G_s|^{(|t|^{-1})}<(|H_s(t)|^{(|t|^{-1})}\cdot|H_0(-t)|^{(|t|^{-1})})^{-1}\;,
\end{equation}
for all $s=1,\ldots,2n-2$. Since $|.|$ is not assumed to be multiplicative, hence $|t^i|\leq|t|^i$. This implies
\begin{equation}
|H_0(-t)|^{(|t|^{-1})}=|H_0(t)|^{(|t|^{-1})}=\sup_{i,j=0,\ldots,n-1}\frac{|t^i||t|^{-i}}{|i!|}
\;\leq\;\sup_{i=0,\ldots,n-1}\frac{1}{|i!|}\;=\;\frac{1}{|(n-1)!|}\;.
\end{equation}
Of course if $|.|$ is power multiplicative\footnote{The norm $|.|$ is power multiplicative if it verifies $|b^n|=|b|^n$ 
for all $b\in\Ba$, and all integer $n\geq 0$}, the above inequality is actually an equality.
Notice that since $|.|$ is ultrametric on $\mathbb{Z}$, then $|(n-1)!|\leq 1$. 
\begin{lemma}\label{Lemma H_s for norm rho=t^-1}
One has 
\begin{equation}
|H_s(t)|^{(|t|^{-1})}\;\leq\;\frac{|t^s|}{|(n-1)!|}\;.
\end{equation}
\end{lemma}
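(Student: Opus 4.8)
The plan is to reduce everything to the explicit description of the entries of $H_s(t)$ furnished by Proposition \ref{explicit H(X)} and to estimate the $\rho$-sup-norm entry by entry. Recall that the $(i,j)$ entry of $H_s(t)$ is $h_{s;i,j}(t)=\alpha(s;i,j)\,t^{s+j-i}/(s+j-i)!$, that $\alpha(s;i,j)$ is an \emph{integer}, and that $h_{s;i,j}=0$ unless $\epsilon_{s;i,j}=1$. Since the weight attached to the slot $(i,j)$ in the norm $|.|^{(|t|^{-1})}$ is $|t|^{i-j}$, the whole statement amounts to bounding each quantity $|h_{s;i,j}(t)|\,|t|^{i-j}$ by $|t^s|/|(n-1)!|$ and then taking the supremum over $i,j$.

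First I would record two elementary reductions. Because $|.|$ is ultrametric on $\mathbb{Z}$ we have $|\alpha(s;i,j)|\leq 1$, so the integer coefficient can be dropped. Next, the support condition of item (3) of Proposition \ref{explicit H(X)} gives, for every nonzero entry, $j-i\leq n-1-s$ together with $j-i\geq -s$, i.e. the exponent $m:=s+j-i$ lies in $\{0,\dots,n-1\}$. Consequently $m!$ divides $(n-1)!$, whence $|(n-1)!|\leq|m!|$ and $1/|m!|\leq 1/|(n-1)!|$. So it remains to control the product $|t^{m}|\,|t|^{i-j}$.

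I would then split according to the sign of $j-i$. For diagonal and upper-triangular slots ($j\geq i$) one has $m=s+(j-i)$ with $j-i\geq 0$, so $t^{m}=t^{s}\,t^{j-i}$ and sub-multiplicativity yields $|t^{m}|\leq|t^{s}|\,|t|^{j-i}$; multiplying by the weight $|t|^{i-j}$ the two powers of $|t|$ cancel and leave exactly $|t^{s}|$. Combined with the factorial estimate, each such entry contributes at most $|t^s|/|m!|\leq|t^s|/|(n-1)!|$, which is precisely the asserted bound.

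The main obstacle is the lower-triangular part ($j<i$), where $m<s$ and the clean factorization $t^{m}=t^{s}\,t^{j-i}$ is no longer available. There the only estimate sub-multiplicativity affords is
\begin{equation}
|t^{m}|\,|t|^{i-j}\;\leq\;|t|^{m}\,|t|^{i-j}\;=\;|t|^{m+i-j}\;=\;|t|^{s}\;,
\end{equation}
so these slots contribute $|t|^{s}/|(n-1)!|$. This already proves the lemma when $|.|$ is power multiplicative, the situation singled out just before the statement, since then $|t^s|=|t|^s$; in general the bookkeeping of the two weights $|t|^{i-j}$ and $|t^{s+j-i}|$ on the lower-triangular entries is the delicate point, and one sees on the entry $(i,j)=(1,0)$ with $s=2$, $n=2$ (value $-t$) that these are exactly the terms governing the supremum. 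Apart from verifying the support inequality $0\leq s+j-i\leq n-1$ and the divisibility $m!\mid(n-1)!$, the argument is routine ultrametric estimation.
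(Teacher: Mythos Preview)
Your approach is exactly the paper's: compute the $\rho$-sup-norm entrywise from Proposition~\ref{explicit H(X)}, discard $|\alpha(s;i,j)|\le 1$, use the support constraint to get $0\le s+j-i\le n-1$, and finish with $|(s+j-i)!|\ge |(n-1)!|$. The paper then invokes the single inequality $|t^{s+j-i}|\le|t^s|\,|t|^{j-i}$ uniformly over all $(i,j)$ in the support and concludes.

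You have in fact been more careful than the paper. Your case split according to the sign of $j-i$ is warranted: for $j\ge i$ one has $t^{s+j-i}=t^s\cdot t^{j-i}$ and sub-multiplicativity gives $|t^{s+j-i}|\le|t^s|\,|t|^{j-i}$ as the paper writes; but for $j<i$ the factorization goes the other way, $t^s=t^{s+j-i}\cdot t^{i-j}$, and sub-multiplicativity yields the \emph{reverse} inequality $|t^s|\le|t^{s+j-i}|\,|t|^{i-j}$. So the paper's key step is not justified on the strictly lower-triangular entries. Your example $n=2$, $s=2$, $(i,j)=(1,0)$ makes this concrete: the contribution is $|t|\cdot|t|=|t|^2$, while the stated bound is $|t^2|$, and there is no reason to have $|t|^2\le|t^2|$ absent power-multiplicativity.

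What you actually prove for the lower-triangular part, namely $|t|^{s}/|(n-1)!|$, is the honest bound, and it is precisely what the paper uses immediately after the lemma (the next displayed inequality already replaces $|t^s|$ by $|t|^s$). So your argument establishes the lemma under power-multiplicativity and, in general, the slightly weaker inequality $|H_s(t)|^{(|t|^{-1})}\le |t|^{s}/|(n-1)!|$, which suffices for the subsequent proposition. I would regard the $|t^s|$ in the lemma's statement as a slip for $|t|^{s}$; your write-up already reflects the correct version.
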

\begin{proof}
Thanks to proposition \ref{explicit H(X)}, for all $s=1,\ldots,2n-2$ one has 
\begin{equation}
|H_s(t)|^{(|t|^{-1})}\;=\;
\max_{i,j=0,\ldots,n-1}|h_{s;i,j}||t|^{i-j}=\max_{i,j=0,\ldots,n-1}|\alpha(s;i,j)|\frac{|t^{s+j-i}|}{|(s+j-i)!|}
|t|^{i-j}\;.
\end{equation}
Now $|\alpha(s,i,j)|\leq 1$, and it is equal to $0$ for $j-i\notin[\max(1-s,1-n),n-1-s]$. So, since 
$|t^{s+j-i}|\leq|t^s||t|^{j-i}$, then we obtain
$|H_s(t)|^{(|t|^{-1})}\;\leq\;
\max_{j-i\in[\max(1-s,1-n),n-1-s]}\frac{|t^{s+j-i}|}{|(s+j-i)!|}|t|^{i-j}
\leq\max_{r\in[\max(1-s,1-n),n-1-s]}\frac{|t^s|}{|(s+r)!|}\;=\;\frac{|t^s|}{|(n-1)!|}$.
\end{proof}

Then one has 
\begin{equation}
(|H_0(-t)|^{(|t|^{-1})}\cdot|H_s(t)|^{(|t|^{-1})})^{-1}
\;\geq\; \frac{|(n-1)!|^2}{|t|^s}\;.
\end{equation}
On the other hand by Lemma \ref{G_s+1 les G_1^s} one has
\begin{equation}
|G_s|^{(|t|^{-1})}\;\leq\;|G_1|^{(|t|^{-1})}\cdot\max(|G_1|^{(|t|^{-1})},|d|)^{s-1}\;.
\end{equation}
So condition \eqref{condition rho} is fulfilled if
\begin{equation}\label{sufficient condition roughhht}
|G_1|^{(|t|^{-1})}\cdot\max(|G_1|^{(|t|^{-1})},|d|)^{s-1} \; < \;\frac{|(n-1)!|^2}{|t|^s}
\end{equation}
for all $s=1,\ldots,2n-2$. 
\begin{proposition}
Assume that
\begin{equation}
|G_1|^{(|t|^{-1})}\;<\;\frac{|(n-1)!|^2|d|}{(|d||t|)^{2n-2}}\;.
\end{equation}
Then $(\M,\nabla)$ is cyclic and the Katz's vector $c_0(\e,t)$ is a cyclic vector for $\M$.
\end{proposition}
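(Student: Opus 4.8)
The plan is to reduce the claim, exactly as in the sup-norm case of the previous subsection, to verifying the numerical inequality \eqref{sufficient condition roughhht} for every $s=1,\ldots,2n-2$. Indeed, Lemma \ref{Lemma H_s for norm rho=t^-1} together with the bound $|H_0(-t)|^{(|t|^{-1})}\le 1/|(n-1)!|$ already shows that condition \eqref{condition rho} follows from \eqref{sufficient condition roughhht}, and the lemma of Section 2.2 (applied to the $\rho$-sup-norm with $\rho=|t|^{-1}$, which satisfies \eqref{compatibility with d}) shows in turn that \eqref{condition rho} forces $H(t)$ to be invertible, whence $c_0(\e,t)$ is cyclic. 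So the whole task is to deduce \eqref{sufficient condition roughhht} from the stated hypothesis.

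First I would record the two elementary inequalities that drive the estimate. Since $d(t)=1$ we have $1=|d(t)|\le|d|\cdot|t|$, so $|d||t|\ge1$; and since $|.|$ is ultrametric on $\mathbb{Z}$ we have $|(n-1)!|\le1$. Combining these for $n\ge2$ gives $(|d||t|)^{2n-2}\ge1\ge|(n-1)!|^2$, so that the right-hand side of the hypothesis satisfies
\[
\frac{|(n-1)!|^2|d|}{(|d||t|)^{2n-2}}\;\le\;|d|.
\]
Hence the hypothesis forces $|G_1|^{(|t|^{-1})}<|d|$, and therefore $\max(|G_1|^{(|t|^{-1})},|d|)=|d|$. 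This is precisely the simplification that makes the recursive bound of Lemma \ref{G_s+1 les G_1^s} collapse to the clean geometric estimate $|G_s|^{(|t|^{-1})}\le|G_1|^{(|t|^{-1})}\cdot|d|^{s-1}$.

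With this in hand, \eqref{sufficient condition roughhht} becomes $|G_1|^{(|t|^{-1})}\cdot|d|^{s-1}<|(n-1)!|^2/|t|^s$; since $|d|\ge1/|t|>0$ one may divide by $|d|^{s-1}$ and rewrite the family of conditions as $|G_1|^{(|t|^{-1})}<|(n-1)!|^2|d|/(|d||t|)^s$ for $s=1,\ldots,2n-2$. Because $|d||t|\ge1$, the quantity $(|d||t|)^s$ is non-decreasing in $s$, so the most restrictive inequality is the one at $s=2n-2$, whose right-hand side is exactly the bound appearing in the statement. Thus the hypothesis is equivalent to the simultaneous validity of all these inequalities, and the argument is complete.

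I expect the only genuinely delicate point to be the reduction $\max(|G_1|^{(|t|^{-1})},|d|)=|d|$: it is what allows Lemma \ref{G_s+1 les G_1^s} to be used in its simplest form, and it rests entirely on the two inequalities $|d||t|\ge1$ and $|(n-1)!|\le1$. Once that is secured, identifying $s=2n-2$ as the binding index and matching the resulting bound to the hypothesis is pure bookkeeping, formally identical to the computation carried out for the sup-norm.
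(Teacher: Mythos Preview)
Your argument is correct and follows essentially the same route as the paper: you use $|d||t|\ge 1$ and $|(n-1)!|\le 1$ to deduce $|G_1|^{(|t|^{-1})}<|d|$, reduce \eqref{sufficient condition roughhht} to $|G_1|^{(|t|^{-1})}<|(n-1)!|^2|d|/(|d||t|)^s$, and identify $s=2n-2$ as the binding index. The only difference is that you spell out in more detail the reductions that the paper leaves implicit.
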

\begin{proof}
Since $d(t)=1$, then $|d||t|\geq 1$. Our assumption then implies $|G_1|^{(|t|^{-1})}<|(n-1)!|^2|d|\leq|d|$. Hence 
\eqref{sufficient condition roughhht} becomes $|G_1|^{(|t|^{-1})}\cdot|d|^{s-1}<\frac{|(n-1)!|^2}{|t|^s}$ for all 
$s=1,\ldots,2n-2$. This inequality is fulfilled if and only if 
$|G_1|^{(|t|^{-1})}<\min_{s=1,\ldots,2n-2}\frac{|(n-1)!|^2|d|}{(|t||d|)^s}= 
\frac{|(n-1)!|^2|d|}{(|t||d|)^{2n-2}}$ which 
is our assumption.
\end{proof}

\subsection{Upper bound for the $\rho$-sup-norm with $\rho=|d|$.}
We now set 
\begin{equation}
\rho\;:=\;|d|\;,\qquad|(a_{i,j})_{i,j}|^{(|d|)}\;=\;\sup_{i,j}|a_{i,j}||d|^{j-i}\;.
\end{equation}
We quickly reproduce the computations of section \ref{section upper bound for rho=t}. As usual we have to prove that
$|G_s|^{(|d|)}<(|H_0(-t)|^{(|d|)}\cdot|H_s(t)|^{(|d|)})^{-1}$. One has 
\begin{equation}
|H_0(-t)|^{(|d|)}\;=\;|H_0(t)|^{(|d|)}\;=\;\max_{i=0,\ldots,n-1}\frac{|t^i||d|^{i}}{|i!|}\;\leq\;
\max_{i=0,\ldots,n-1}\frac{(|d||t|)^{i}}{|i!|}
\end{equation}
As usual this becomes an equality if $|.|$ is power multiplicative.
\begin{lemma}\label{increasing if rho>1}
Let $\rho\geq 1$ be a real number, and let $s\geq 0$ be an integer. The sequence of real numbers $i\mapsto 
\rho^i/|(s+i)!|$ is increasing.
\end{lemma}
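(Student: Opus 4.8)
The plan is to prove the claim by examining the ratio of consecutive terms and showing that it is always at least $1$. Write $u_i := \rho^i/|(s+i)!|$ for $i\geq 0$; I must show $u_{i+1}\geq u_i$ for every integer $i\geq 0$.

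First I would compute the quotient
\[
\frac{u_{i+1}}{u_i}\;=\;\rho\cdot\frac{|(s+i)!|}{|(s+i+1)!|}\;.
\]
The key input is the multiplicativity of $|.|$ on the integers: the normalization axiom $|na|=|n||a|$, applied to the images of integers in $\Ba$ (take $a=m\cdot 1$), forces $|nm|=|n||m|$ for all $n,m\in\mathbb{Z}$. Applying this to the factorization $(s+i+1)!=(s+i+1)\cdot(s+i)!$ yields $|(s+i+1)!|=|s+i+1|\cdot|(s+i)!|$, so that
\[
\frac{u_{i+1}}{u_i}\;=\;\frac{\rho}{|s+i+1|}\;.
\]

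It then remains to invoke two elementary facts already recalled in the text: the norm is ultrametric on $\mathbb{Z}$, whence $|s+i+1|\leq 1$, and by hypothesis $\rho\geq 1$. Together these give $\rho/|s+i+1|\geq\rho\geq 1$, so $u_{i+1}\geq u_i$ and the sequence is (non-strictly) increasing. Equivalently, one may avoid division entirely: since $|s+i+1|\leq 1$, the denominators satisfy $|(s+i+1)!|\leq|(s+i)!|$ and are therefore non-increasing, while the numerators $\rho^i$ are non-decreasing because $\rho\geq 1$; hence the ratio increases.

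There is no genuine obstacle here. The only points requiring a word of care are the passage from the normalization axiom $|na|=|n||a|$ to honest multiplicativity on $\mathbb{Z}$, and the implicit assumption that the factorials appearing in the denominators have nonzero norm, so that the $u_i$ are bona fide real numbers; this holds in the range of indices relevant to the application, where $(n-1)!$ is invertible in $\Ba$.
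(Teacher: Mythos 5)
Your proof is correct and follows essentially the same route as the paper's: compare consecutive terms, reduce via multiplicativity of the norm on $\mathbb{Z}$ to the inequality $\rho/|s+i+1|\geq 1$, and conclude from $\rho\geq 1$ and ultrametricity forcing $|s+i+1|\leq 1$. You merely make explicit two points the paper leaves implicit (that $|nm|=|n||m|$ on $\mathbb{Z}$ follows from the axiom $|na|=|n||a|$, and that the denominators are nonzero in the relevant range), which is fine.
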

\begin{proof}
One has $\rho^{i+1}/|(s+i+1)!|\geq\rho^i/|(s+i)!|$ if and only if $\rho/|s+i+1|\geq 1$. This last is true since the 
norm of a integer is $\leq 1$, because the norm is ultrametric.
\end{proof}
Since $d(t)=1$, then $|d||t|\geq 1$, so we then have
\begin{equation}
|H_0(-t)|^{(|d|)}\;\leq\;\frac{(|d||t|)^{n-1}}{|(n-1)!|}\;.
\end{equation}
\begin{lemma}
One has
\begin{equation}
|H_s(t)|^{(|d|)}\;\leq\;|t^s|\cdot\frac{(|d||t|)^{n-1-s}}{|(n-1)!|}
\end{equation}
\end{lemma}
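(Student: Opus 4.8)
The plan is to mirror the proof of Lemma~\ref{Lemma H_s for norm rho=t^-1}, replacing the weight $|t|^{-1}$ by $|d|$ and feeding the monotonicity recorded in Lemma~\ref{increasing if rho>1} at the decisive step. By the definition of the $|d|$-sup-norm together with Proposition~\ref{explicit H(X)} one has
\begin{equation}
|H_s(t)|^{(|d|)}\;=\;\max_{i,j=0,\ldots,n-1}|\alpha(s;i,j)|\,\frac{|t^{s+j-i}|}{|(s+j-i)!|}\,|d|^{j-i}\;.
\end{equation}
First I would discard the integer coefficient, using that $|\alpha(s;i,j)|\leq 1$ since the norm is ultrametric on $\mathbb{Z}$, and that $\alpha(s;i,j)=0$ as soon as $j-i\notin[\max(1-s,1-n),n-1-s]$ by part (3) of Proposition~\ref{explicit H(X)}. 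This reduces the problem to estimating $\frac{|t^{s+r}|}{|(s+r)!|}\,|d|^{r}$ over $r:=j-i$ ranging in that interval.

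Next I would factor out $|t^s|$ through the submultiplicativity $|t^{s+r}|\leq|t^s|\,|t|^{r}$, which turns the quantity to be maximised into $|t^s|\cdot\frac{(|t||d|)^{r}}{|(s+r)!|}$. The essential input here is that $d(t)=1$ forces $|d||t|\geq 1$, so that Lemma~\ref{increasing if rho>1} applies with $\rho=|t||d|$ and shows that $r\mapsto(|t||d|)^{r}/|(s+r)!|$ is increasing. Hence the maximum over the admissible interval is attained at its right endpoint $r=n-1-s$, where $s+r=n-1$ and the denominator becomes $|(n-1)!|$. Collecting the surviving factors yields exactly
\begin{equation}
|H_s(t)|^{(|d|)}\;\leq\;|t^s|\cdot\frac{(|t||d|)^{n-1-s}}{|(n-1)!|}\;,
\end{equation}
which is the claimed estimate; as in the previous two cases the bound $|t^{s+r}|\leq|t^s||t|^{r}$ is an equality whenever $|.|$ is power multiplicative, so the inequality is sharp there.

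The step I expect to require the most care is the localisation of the maximum at $r=n-1-s$. One must verify that this endpoint really belongs to the support $[\max(1-s,1-n),n-1-s]$ for every $s=1,\ldots,2n-2$, and that at it the surviving power $s+r=n-1$ produces precisely the factorial $|(n-1)!|$ announced in the statement. The monotonicity furnished by Lemma~\ref{increasing if rho>1}, which is available exactly because $|t||d|\geq 1$, is what guarantees that no interior value nor the left endpoint of $r$ can exceed this bound, so that the right endpoint alone controls the whole $|d|$-sup-norm.
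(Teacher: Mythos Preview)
Your proposal is correct and follows essentially the same route as the paper's own proof: expand $|H_s(t)|^{(|d|)}$ via Proposition~\ref{explicit H(X)}, drop $|\alpha(s;i,j)|\leq 1$ and restrict to the support of $\alpha$, bound $|t^{s+r}|\leq|t^s||t|^{r}$, and then invoke Lemma~\ref{increasing if rho>1} with $\rho=|d||t|\geq 1$ to locate the maximum at $r=n-1-s$. Your additional remarks about sharpness under power multiplicativity and about the right endpoint lying in the admissible interval are fine and do not diverge from the paper's argument.
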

\begin{proof}
As in lemma \ref{Lemma H_s for norm rho=t^-1} one has 
\begin{equation}
|H_s(t)|^{(|d|)}=\max_{i,j}|\alpha(s;i,j)|\frac{|t^{s+j-i}||d|^{j-i}}{|(s+j-i)!|}\leq
\max_{i,j}\frac{|t^{s}|(|d||t|)^{j-i}}{|(s+j-i)!|}=
|t^{s}|\cdot\max_{r}\frac{(|d||t|)^{r}}{|(s+r)!|}\;,
\end{equation}
where $i,j$ runs in $[0,n-1]$, and $r\in[\max(1-s,1-n),n-1-s]$. By Lemma \ref{increasing if rho>1} the last maximum is 
equal to $(|d||t|)^{n-1-s}/|(n-1)!|$.
\end{proof}
Then one has
\begin{equation}
(|H_0(-t)|^{(|d|)}\cdot|H_s(t)|^{(|d|)})^{-1}\;\geq\; \frac{|(n-1)!|^2}{|t^s|\cdot(|d||t|)^{2n-2-s}}\;.
\end{equation}
As usual one also has $|G_s|^{(|d|)}\leq|G_1|^{(|d|)}\cdot\max(|G_1|^{(|d|)},|d|)^{s-1}$, 
so what we need is
\begin{equation}\label{drtdrtdrt}
|G_1|^{(|d|)}\cdot\max(|G_1|^{(|d|)},|d|)^{s-1} \; < \;\frac{|(n-1)!|^2}{|t|^s(|d||t|)^{2n-2-s}}
\end{equation}
for all $s=1,\ldots,2n-2$. 
\begin{proposition}
Assume that
\begin{equation}
|G_1|^{(|d|)}\;<\;\frac{|(n-1)!|^2|d|}{(|d||t|)^{2n-2}}\;.
\end{equation}
Then $(\M,\nabla)$ is cyclic and the Katz's vector $c_0(\e,t)$ is a cyclic vector for $\M$.
\end{proposition}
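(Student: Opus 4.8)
The plan is to follow verbatim the pattern of the two preceding propositions: reduce the invertibility of $H(t)$ (in the norm $|.|^{(|d|)}$) to the family of inequalities \eqref{drtdrtdrt}, and then show that the single hypothesis on $|G_1|^{(|d|)}$ forces all $2n-2$ of them at once. The two preparatory lemmas of this subsection already yield $(|H_0(-t)|^{(|d|)}\cdot|H_s(t)|^{(|d|)})^{-1}\geq|(n-1)!|^2/(|t^s|(|d||t|)^{2n-2-s})$, and Lemma \ref{G_s+1 les G_1^s} yields $|G_s|^{(|d|)}\leq|G_1|^{(|d|)}\max(|G_1|^{(|d|)},|d|)^{s-1}$, so the only work left is the arithmetic of \eqref{drtdrtdrt}.

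First I would extract from $d(t)=1$ the inequality $|d|\,|t|\geq1$, hence $(|d||t|)^{2n-2}\geq1$; together with $|(n-1)!|\leq1$ this turns the hypothesis into
\[
|G_1|^{(|d|)}<\frac{|(n-1)!|^2|d|}{(|d||t|)^{2n-2}}\leq|d|.
\]
The point of this estimate is that it lets me collapse $\max(|G_1|^{(|d|)},|d|)$ to $|d|$, so that \eqref{drtdrtdrt} becomes the cleaner family $|G_1|^{(|d|)}\,|d|^{s-1}<|(n-1)!|^2/(|t|^s(|d||t|)^{2n-2-s})$.

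The decisive observation is then a cancellation: dividing through by $|d|^{s-1}$, the denominator $|t|^s(|d||t|)^{2n-2-s}|d|^{s-1}$ carries total exponent $2n-2$ in $|t|$ and $2n-3$ in $|d|$, both independent of $s$. Thus every one of the $2n-2$ conditions reduces to the single inequality $|G_1|^{(|d|)}<|(n-1)!|^2/(|t|^{2n-2}|d|^{2n-3})$, which is precisely the hypothesis. Hence \eqref{drtdrtdrt} holds for all $s$, the sufficient condition \eqref{condition ....} is met, and by the invertibility criterion \eqref{small norm implies invertible} the matrix $H(t)$ is invertible, so $c_0(\e,t)$ is a cyclic vector for $\M$.

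I expect no genuine obstacle, the argument being strictly parallel to the case $\rho=|t|^{-1}$. The only care needed is to track the non-multiplicativity estimate $|t^s|\leq|t|^s$ in the right direction: replacing $|t^s|$ by the larger $|t|^s$ in the denominator of the target only shrinks the bound, so the stronger $s$-independent inequality I end up verifying still implies the original requirement phrased with $|t^s|$.
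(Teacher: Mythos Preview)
Your proof is correct and follows essentially the same route as the paper: deduce $|G_1|^{(|d|)}<|d|$ from $|d||t|\geq 1$ and $|(n-1)!|\leq 1$, collapse the maximum in \eqref{drtdrtdrt}, and observe that the resulting bound $|(n-1)!|^2/(|t|^s(|d||t|)^{2n-2-s}|d|^{s-1})$ is independent of $s$ and equals the hypothesis. Your explicit remark on the direction of the estimate $|t^s|\leq|t|^s$ is a welcome clarification of a step the paper leaves implicit when passing from the displayed lower bound involving $|t^s|$ to the formulation \eqref{drtdrtdrt} involving $|t|^s$.
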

\begin{proof}
Since $d(t)=1$, then $|d||t|\geq 1$. Our assumption then implies $|G_1|^{(|t|^{-1})}<|(n-1)!|^2|d|\leq|d|$. Hence 
\eqref{drtdrtdrt} becomes $|G_1|^{(|d|)}\cdot|d|^{s-1}<\frac{|(n-1)!|^2}{|t|^{s}(|d||t|)^{n-1-s}}$, for all 
$s=1,\ldots,2n-2$. But this is actually our assumption.
\end{proof}

\if{
\section{Differential equations over an affinoid. 
Reduction to a small connection.}

Cher Gilles,\\

Je liste ici ce qu'il manque pour completer.
\begin{itemize}
\item Il nous faut des lemmes qui permettent de se ramener au cas de norme de la matrice de la connexion petite. Je crois qu'il faudra faire des hypothèses sur le rayon de convergence.
\item Ensuite si cela est vrai, j'ai le procédé qu'il faut pour montrer que toute équation différentielle
soluble avec structure de Frobenius sur l'anneau de Robba est cyclique et donc \emph{bornée} grâce à Young.
Cela est en effet déjà connu car Y.André a montré que,
grâce au théorème de la mondromie locale $p$-adique, ces modules ont un réseau sur l'anneau
d'Amice. Ce qui est intéressant ici est le fait que nous on y arrive par un procédé indépendant qui ne fait pas appel à
la ``\emph{grosse machine}''. L'idée est alors que cela devrait
permettre de trouver une preuve plus brève, ou alternative, du théorème de la monodromie locale $p$-adique,
car il est bien connu que l'une des difficultés majeures dans la théorie est que la matrice de la connexion
n'est pas bornée.
\item
\end{itemize}

\if{
\section{Da fare}
COSA SUCCEDE NEI CASI SEGUENTI: \\

1) CASO DI UN'ALGEBRA DI BANACH NON ULTRAMETRICA ?\\

2) CASO IN CUI IL CORPO DI BASE ABBIA TOPOLOGIA DISCRETA, e VALORE ASSOLUTO TRIVIALE ?\\

3) CASO DELL'ANELLO $K[[t]]$ tutto è cyclico qui. Ma il mio algoritmo cosa dice? \\

4) IL MIO ALGORITMO COSA DICE NEL CASO DI UN ANELLO TOPOLOGICO PIU' GENERALE? Servirebbe forse che l'anello topologico 
$\mathscr{B}$ avesse una topologia tale che valgano alcuni lemmi elementari. Forse basta che se \\

CITARE  : KATZ, DELIGNE, CHRISTOL-DWORK.

}\fi
}\fi

\bibliographystyle{amsalpha}
\bibliography{bib}

\end{document}